\documentclass[a4paper]{article}

\usepackage{amsfonts,amsmath,amsthm}
\usepackage{graphicx,mathtools,xcolor}
\usepackage[hidelinks]{hyperref}
\usepackage{tikz, soul}
\usepackage{subcaption}
\usetikzlibrary{matrix, fit, positioning, calc}
\usepackage{cleveref}

\definecolor{newgreen}{RGB}{59, 177, 67}
\definecolor{ng}{RGB}{59, 177, 67}
\definecolor{b}{RGB}{0,122,255}

\newtheorem{theorem}{Theorem}[section]
\newtheorem{lemma}[theorem]{Lemma}
\newtheorem{corollary}[theorem]{Corollary}

\newtheorem{problem}[theorem]{Problem}

\theoremstyle{definition}
\newtheorem{definition}[theorem]{Definition}
\newtheorem{example}[theorem]{Example}

\newtheorem{manualtheoreminner}{Theorem}
\newenvironment{manualtheorem}[1]{
  \IfBlankTF{#1}
    {\renewcommand{\themanualtheoreminner}{\unskip}}
    {\renewcommand\themanualtheoreminner{#1}}
  \manualtheoreminner
}{\endmanualtheoreminner}

\newcommand{\X}{\mathcal{X}}
\newcommand{\W}{\mathcal{W}}
\newcommand{\D}{\mathcal{D}}
\newcommand{\R}{\mathcal{R}}
\newcommand{\Ss}{\mathcal{S}}
\newcommand{\RC}{\mathrm{RC}}
\newcommand{\on}{\operatorname}

\newcommand\SmallArray[1]{{
  \tiny \arraycolsep=0.08\arraycolsep\ensuremath{\begin{array}{c}#1\end{array}}}}

\title{Generalising Latin square orthogonality and Frobenius-K\"onig with alternating sign matrices }

\author{Alena Ernst, Stefano Lia, Cian O'Brien, John Sheekey,\\ and Jens Zumbrägel}

\date{}

\begin{document}

\maketitle

\renewcommand{\thefootnote}{} \footnotetext{\textit{2020 Mathematics Subject Classification.} 05B20, 15B36, 05B15.}
\footnotetext{\textit{Keywords.} Latin squares, alternating sign matrices, orthogonality, Frobenius-K\"onig theorem, hypermatrices.}

\renewcommand{\thefootnote}{\arabic{footnote}} 

\begin{abstract}
The theory of Latin squares has a long history. While the objects themselves appeared earlier, the study of their general mathematical theory dates back to Euler in the 18th century. Latin squares can be interpreted as 3-dimensional permutation hypermatrices, and alternating sign matrices often arise as a natural generalisation of permutation matrices. In 2018, Brualdi and Dahl introduced a generalisation of classical Latin squares using alternating sign hypermatrices.
Inspired by their definition, we develop the theory of Italian squares, a related  generalisation of Latin squares, together with a notion of orthogonality that resolves an inconsistency in the definition of Brualdi and Dahl. Building on classical questions from Latin square theory, we obtain results including upper bounds on the maximal size of a pairwise orthogonal set, conditions for the existence of an orthogonal mate, infinite families of orthogonal pairs, and transversals. As part of our exploration of alternating sign matrices, we also prove a Frobenius–K\"onig type result for a class of $(0, \pm1)$-matrices.
\end{abstract}

\section{Introduction}

A Latin square of order~$n$ is an $n \times n$ matrix containing~$n$ distinct symbols such that each row and column contains each symbol exactly once.  Two Latin squares of order~$n$ are orthogonal if, when the squares are superimposed, each of the $n^2$ ordered pairs of symbols appears exactly once.
Mutually orthogonal Latin squares (MOLS) have been studied for centuries, and they have interesting connections to combinatorics, finite geometry and applied algebra, see for example~\cite{KeedwellDenes}.
Despite their elementary nature, there are many associated open problems, the most famous being the prime power conjecture.  It states that a set of $n \!-\! 1$ MOLS of order $n$ (which is easily seen to be maximal) exists if and only if~$n$ is a prime power.

Alternating sign matrices (ASMs) have been the subject of sustained research interest in recent decades, arising in a number of different contexts \cite{MillsRobbinsRumsey83, LascouxSchutzenberger96} as a natural generalisation of permutation matrices. One may view  a Latin square of order~$n$ as an $n \times n \times n$ permutation hypermatrix, so that each row, column, and vertical line contains exactly a single~$1$ and otherwise~$0$.  Moreover, the orthogonality condition of Latin squares means that for any two matrix layers corresponding to different hypermatrices, their inner product equals one.

This interpretation leads to generalisations of orthogonal Latin squares in terms of alternating sign matrices.  Accordingly, in an $n \times n \times n$ alternating sign hypermatrix the non-zero entries $\pm 1$ of each row, column, and vertical line alternate in sign, starting and ending with $+1$.
These matrices were introduced by Brualdi and Dahl~\cite{BrualdiDahl18}, and a compressed version of these are \emph{Latin-like squares}, which they study.
Here, we consider alternating sign hypermatrices in a form that more closely resembles a Latin square, which we call an \emph{Italian square} (as they can be seen as a “modern version” of the Latin ones).  Our notion seems more appropriate in the context of orthogonality, which can be defined by the matrix inner product the same way as Latin squares.
 
In this work, we address and resolve several open problems from Brualdi and Dahl~\cite{BrualdiDahl18, BrualdiDahl24}, relating to the study of pairwise orthogonal Italian squares (POIS).
Along the way we study \emph{transversals}, which are $n \times n$ alternating sign matrices that are orthogonal to each matrix layer of a given Italian square.
The Frobenius-K\"onig theorem, which states when a $(0, 1)$-matrix contains a permutation matrix, plays a role in our characterisation of orthogonal alternating sign matrices, and we also state a variation of this theorem for ASMs. 

Here is an outline of the paper.  In Section~\ref{sec:background} we give some background and define Italian squares and orthogonality, comparing it to Brualdi and Dahl's notion of Latin-like squares.
Subsequently, in Section~\ref{sec:mate} we give a characterisation of those alternating sign matrices that possess an orthogonal alternating sign matrix.

\begin{manualtheorem}{\ref{thm: ortho mate}}

An ASM has an orthogonal mate if and only if it is not one of the following exceptions. 
\[ \begin{pmatrix} 1 & 0 \\ 0 & 1 \end{pmatrix} , \quad
\begin{pmatrix} 0 & 1 \\ 1 & 0 \end{pmatrix} , \quad
\begin{pmatrix} 0 & 1 & 0 \\ 1 & \!-1\! & 1 \\ 0 & 1 & 0 \end{pmatrix} \]
Moreover, every ASM with a mate is orthogonal to a permutation matrix.
\end{manualtheorem}

In Section~\ref{sec:frob}, we state variations of the Frobenius-K\"onig theorem for $(0, \pm 1)$-matrices with row/column sums in $\{0, \pm 1\}$ and alternating non-zero entries in each row/column. In particular, we give a Frobenius-K\"onig type condition for when it is possible to replace some zero entries in a given $(0, 1)$-matrix (resp.\ $(0, -1)$-matrix) with~$-1$ (resp.~$+1$) such that the resulting matrix is an ASM.

Moving on to $n \times n \times n$ hypermatrices, we study in Section~\ref{sec:transversals} transversals and examine those of the \emph{diamond} alternating sign hypermatrix in particular.

\begin{manualtheorem}{\ref{thm: diamond no transversal}}
    The diamond Italian square $\mathcal D_n$ possesses a transversal if and only if $n \equiv \pm 1 \bmod 6$.
\end{manualtheorem}

Then we turn to pairwise orthogonal Italian squares and prove in Section~\ref{sec:pois} that the general upper bound on the number of POIS coincides with that of MOLS.

\begin{manualtheorem}{\ref{thm:bound_on_POIS_size}}
    There are at most $n \!-\! 1$ POIS of order~$n$.
\end{manualtheorem}

In Section 7, we extend a well-known Kronecker-like product of Latin squares to Italian squares, and use this to show that for infinitely many orders~$n$, there exist pairs of orthogonal Italian squares of order~$n$ that are not Latin squares.

\begin{manualtheorem}{\ref{thm:explicit}}
    There exists a pair of POIS of order $k m$, neither of which is a Latin square, for $k \ne 2$ and $m \in \{ 5, 6, 7, 11 \}$. Moreover, there exists a set of three POIS of order~$7 k$, none of which is a Latin square, for all $k \not\in \{ 2, 3, 6, 10 \}$.
\end{manualtheorem}

Finally, we list some open problems and future work in Section~\ref{sec:conclude}.

\section{Background}\label{sec:background}

In this section, we provide some preliminaries and background on Latin squares and orthogonality, as well as alternating sign matrices and Italian squares.

\subsection{Latin squares and orthogonality}

For a positive integer $n$, a \emph{Latin square} of order $n$ is an $n \times n$ array containing $n$ different symbols, each occurring exactly once in each column and each row. Typically, symbols are taken to be $[n] := \{1,2,\dots,n\}$.

\begin{definition}
Two Latin squares $A$ and $B$ of the same order are \emph{orthogonal} if, for every symbol $a$ in $A$ and every symbol $b$ in $B$, there is exactly one position where $A$ has the entry $a$ and $B$ has the entry $b$. A set of Latin squares in which each distinct pair is orthogonal is called a set of \emph{mutually orthogonal Latin squares (MOLS)}. 
\end{definition}

MOLS are often alternatively referred to as \emph{pairwise} orthogonal Latin squares.  See Example~\ref{construction n=5} for an example of a set of four MOLS of order~5.

Orthogonality is one of the most important concepts within the study of Latin squares, with constructions for sets of MOLS and bounds on their cardinality being of particular historical interest. The study of MOLS was first made popular by the \emph{36 Officers Problem}, a puzzle in which officers of 6 different ranks and regiments are to be arranged in a $6 \times 6$ configuration so that each row and column contains an officer of each regiment and each rank. This is the problem that Euler first considered, and he concluded it was impossible (without proof). 

There exist only two Latin squares of order 2, which are not orthogonal to one another. Euler conjectured that there is no pair of MOLS of order~$n$ for $n \equiv 2 \bmod 4$, and proved that there exists a pair for all other $n \ge 2$. In 1901, it was proven that no pair exist for $n=6$ \cite{Tarry1901}. In 1959, Euler's conjecture was proven false for $n>6$, and consequently a pair of MOLS exists for all $n \not\in\{1,2,6\}$ \cite{BoseShrikandeParker60}.

Given a set $S = \{ L_1, L_2, \dots, L_m \}$ of~$m$ MOLS of order~$n$, the following simple argument \cite[Th.~22.1]{VanLintWilson92} proves that $m \le n \!-\! 1$. Without loss of generality, the symbols in each $L_i$ may be swapped so that the first row is $\begin{array}{|c|c|c|c|}\hline 1 & 2 & \dots & n \\\hline \end{array}$. Since $(L_i)_{1,k} = (L_j)_{1,k}$ for all $k\in[n]$, no pair in $S$ may contain the same symbol in any position outside of row 1. Since the $(1,n)$-entry is $n$, each $(2,n)$-entry must be a distinct element of $[n \!-\! 1]$, and therefore $m \le n \!-\! 1$.

The finite field based construction, cf.~\cite[Sec.~22]{VanLintWilson92}, achieves this bound for prime power $n$. Indeed, for $k \in [n \!-\! 1]$, define $(L_k)_{i,j} = ki+j$ for each $i, j \in \mathbb{F}_n$. Then $\{L_1, L_2, \dots, L_{n-1} \}$ is a set of $n \!-\! 1$ MOLS of order~$n$.

\begin{example}\label{construction n=5} 
The previous finite field based construction for $n=5$ generates the following orthogonal Latin squares.
\[ \begin{array}{|c|c|c|c|c|}\hline 
0 & 1 & 2 & 3 & 4 \\\hline 
1 & 2 & 3 & 4 & 0 \\\hline 
2 & 3 & 4 & 0 & 1 \\\hline 
3 & 4 & 0 & 1 & 2 \\\hline 
4 & 0 & 1 & 2 & 3 \\\hline 
\end{array} \quad
\begin{array}{|c|c|c|c|c|}\hline
0 & 1 & 2 & 3 & 4 \\\hline
2 & 3 & 4 & 0 & 1 \\\hline
4 & 0 & 1 & 2 & 3 \\\hline
1 & 2 & 3 & 4 & 0 \\\hline
3 & 4 & 0 & 1 & 2 \\\hline
\end{array} \quad
\begin{array}{|c|c|c|c|c|}\hline
0 & 1 & 2 & 3 & 4 \\\hline
3 & 4 & 0 & 1 & 2 \\\hline
1 & 2 & 3 & 4 & 0 \\\hline
4 & 0 & 1 & 2 & 3 \\\hline
2 & 3 & 4 & 0 & 1 \\\hline
\end{array} \quad
\begin{array}{|c|c|c|c|c|}\hline
0 & 1 & 2 & 3 & 4 \\\hline
4 & 0 & 1 & 2 & 3 \\\hline
3 & 4 & 0 & 1 & 2 \\\hline
2 & 3 & 4 & 0 & 1 \\\hline
1 & 2 & 3 & 4 & 0 \\\hline
\end{array} \]
\end{example}

\subsection{Alternating sign matrices and Latin-like squares}
\label{subsec:alternsignmat}

\begin{definition}
An \emph{alternating sign matrix (ASM)} is a $(0, 1, -1)$-matrix with the property that the non-zero entries in each row and column alternate in sign, beginning and ending with $+1$.
\end{definition}

An immediate consequence of this definition is that each row and column of an ASM sums to 1, and therefore each ASM is a square matrix. Permutation matrices are examples of ASMs, and ASMs arise in a number of contexts as a natural generalisation of permutations. For example, ASMs of order $n$ are the minimal lattice extension (Dedekind-MacNeille completion) of the symmetric group $S_n$ of permutations on $n$ symbols under the \emph{Bruhat order} \cite{LascouxSchutzenberger96}.

While permutation matrices are the ASMs with the least number of non-zero entries, the \emph{diamond ASMs} have the greatest number. There is a unique \emph{diamond ASM} $D_n$ of odd order $n$, and a pair of diamond ASMs $D_n$ and $D_n^\prime$ for even $n$ which are the reflections of each other. $D_n$ is defined as follows. For $i,j \in [n]$, the $(i,j)$-entry of $D_n$ is defined as follows.
\[ (D_n)_{ij} = \begin{cases}
    (-1)^{i + j + \lceil \frac n 2 \rceil + 1} & \text{if } |i \!+\! j \!-\! n \!-\! 1| \le n \!-\! \lceil \frac n 2 \rceil \text{ and } |i \!-\! j| \le \lceil \frac n 2 \rceil \!-\! 1 \\
    0 & \text{otherwise}
\end{cases} \]

For example,
\[
D_5 = \begin{pmatrix}
       &   & + &   &   \\
       & + & - & + &   \\
     + & - & + & - & + \\
       & + & - & + &   \\
       &   & + &   & 
\end{pmatrix} , \quad
D_6 = \begin{pmatrix}
      &   & + &   &   &   \\
      & + & - & + &   &   \\
    + & - & + & - & + &   \\
      & + & - & + & - & + \\
      &   & + & - & + &   \\
      &   &   & + &   & 
\end{pmatrix} ,
\]
where here and henceforth we abbreviate $+1$ and $-1$ entries by $+$ and $-$, and leave 0 entries blank.

Moreover, the number of entries in row/column $1, 2, 3, \dots, n \!-\! 2, n \!-\! 1, n$ of an $n \times n$ ASM is at most $1, 3, 5, \dots, 5, 3, 1$.  The diamond ASMs are the unique ASMs achieving this bound.

A \emph{hypermatrix} of order $n$ is an $n \times n \times n$ array $H = (h_{ijk})$ with rows (fixing $i,k$ and varying $j$), columns (varying $i$), and \emph{vertical lines} (varying $k$). The $k$th \emph{plane} $H_k$ of $H$ is the matrix for fixed $k$ and varying $i,j$. We also write $H = (H_1, H_2, \dots, H_n)$.

A \emph{permutation hypermatrix} $P$ of order $n$ has entries from $\{0,1\}$ and has only one non-zero entry in each row, each column, and each vertical line. The set of Latin squares of order $n$ with symbols $[n]$ is in the following simple bijection with the set of 3-dimensional \emph{permutation hypermatrices}. The positions in the Latin square $L$ of the symbol $k\in[n]$ correspond to the positions of the $+1$ entries in plane $P_k$ of the permutation hypermatrix $P$.
\[ L(P) = \sum_{k=1}^n k P_k \]

\begin{example} Latin square $L$ corresponding to permutation hypermatrix $P$.
\[ P = \left( \begin{pmatrix}
    + &  &   \\
      &  & + \\
      & + & 
\end{pmatrix},
\begin{pmatrix}
      & + &   \\
    + &   &   \\
      &   & +
\end{pmatrix},
\begin{pmatrix}
      &   & + \\
      & + &   \\
    + &   & 
\end{pmatrix} \right) \iff L = \begin{array}{|c|c|c|}\hline
1 & 2 & 3 \\\hline
2 & 3 & 1 \\\hline
3 & 1 & 2 \\\hline
\end{array} \]
\[ \sum_{k=1}^n k P_k =
\begin{pmatrix}
      1 & 0 & 0 \\
      0 & 0 & 1 \\
      0 & 1 & 0 \end{pmatrix}
+ 2 \begin{pmatrix}
      0 & 1 & 0 \\
      1 & 0 & 0 \\
      0 & 0 & 1 \end{pmatrix}
+ 3 \begin{pmatrix}
      0 & 0 & 1 \\
      0 & 1 & 0 \\
      1 & 0 & 0 \end{pmatrix} =
\begin{pmatrix}
1 & 2 & 3 \\
2 & 3 & 1 \\
3 & 1 & 2 \\
\end{pmatrix} \]
\end{example}

Motivated by this observation, Brualdi and Dahl \cite{BrualdiDahl18} defined the following 3-dimensional analogues of ASMs and used them to generalise Latin squares.

\begin{definition}
    An \emph{alternating sign hypermatrix (ASHM)} is a hypermatrix $(a_{ijk})$ with entries in $\{0,1,-1\}$ such that the non-zero entries in each row, column, and vertical line alternate in sign, beginning and ending with $+1$. Given an ASHM $A$ we write $A= (A_k)=(A_1,A_2, \ldots, A_n)$ where $A_k$ denotes the $k$-th plane of $A$.
\end{definition}

An \emph{ASHM Latin-like square} (\emph{Latin-like  square} for short) $L$ of order $n$ is then an $n \times n$ array such that, for some ASHM $A = (A_1,A_2,\dots,A_n)$,
    \[L(A) =\sum_{k=1}^n k A_k.\]

\begin{example}
    The following ASHM $A = (A_1,A_2,A_3)$ and its reflection $(A_3,A_2,A_1)$ are the only $n \times n \times n$ ASHM of order $n \le 3$ containing a negative entry.
\[ A = \left( \begin{pmatrix}
    + &   &   \\
      & + &   \\
      &   & +
\end{pmatrix},
\begin{pmatrix}
      & + &   \\
    + & - & + \\
      & + & 
\end{pmatrix},
\begin{pmatrix}
      &   & + \\
      & + &   \\
    + &   & 
\end{pmatrix} \right) \]
\[ L(A) = 
\begin{pmatrix}
    1 & 0 & 0 \\
    0 & 1 & 0 \\
    0 & 0 & 1
\end{pmatrix}
+ 2 \begin{pmatrix}
    0 & 1 & 0 \\
    1 & \!-1\! & 1 \\
    0 & 1 & 0
\end{pmatrix}
+ 3 \begin{pmatrix}
    0 & 0 & 1 \\
    0 & 1 & 0 \\
    1 & 0 & 0
\end{pmatrix} =
\begin{pmatrix}
    1 & 2 & 3 \\
    2 & 2 & 2 \\
    3 & 2 & 1
\end{pmatrix} \]
\end{example}

Let $\langle A, B \rangle$ denote the Frobenius inner product of the $n \times n$ matrices $A$ and $B$, i.e.\ $\langle A, B \rangle = \text{tr}(AB^\top)$. Brualdi and Dahl noted in \cite{BrualdiDahl18} that given permutation hypermatrices $P$ and $Q$ of order $n$ with planes $(P_k)$ and $(Q_k)$, respectively, Latin squares $L(P)$ and $L(Q)$ are orthogonal if and only if $\langle P_i, Q_j \rangle = 1$ for all $i,j \in[n]$. For ASHMs $A=(A_k)$ and $B=(B_k)$, they proposed to define Latin-like squares $L(A)$ and $L(B)$ to be orthogonal if $\langle A_i, B_j \rangle = 1$ for all $i,j \in[n]$.

They note the existence of the pair of orthogonal Latin-like squares of order $6$ as given in Figure~\ref{fig:orthog_6} (with respective corresponding alternating sign hypermatrices), and since there exists no pair of MOLS of order 6, they suggested that there may exist other sets of mutually orthogonal Latin-like squares of order $n$ whose cardinality is greater than the best possible for MOLS of order $n$.

\begin{figure}[h]
\[ \begin{array}{|c|c|c|c|c|c|}\hline
1 & 6 & 3 & 4 & 5 & 2 \\\hline
6 & 3 & 3 & 5 & 3 & 1 \\\hline
3 & 4 & 6 & 1 & 4 & 3 \\\hline
5 & 4 & 3 & 2 & 1 & 6 \\\hline
2 & 3 & 3 & 3 & 6 & 4 \\\hline
4 & 1 & 3 & 6 & 2 & 5 \\\hline
\end{array} \quad\perp\quad
\begin{array}{|c|c|c|c|c|c|}\hline
1 & 2 & 4 & 5 & 3 & 6 \\\hline
3 & 5 & 2 & 5 & 3 & 3 \\\hline
2 & 1 & 6 & 5 & 3 & 4 \\\hline
5 & 4 & 1 & 3 & 3 & 5 \\\hline
4 & 5 & 5 & 2 & 4 & 1 \\\hline
6 & 4 & 3 & 1 & 5 & 2 \\\hline
\end{array} \]
\begin{multline*} {\footnotesize \qquad \left( \begin{pmatrix}
 +  ~~~ ~~~ ~~~ ~~~ ~~~ \\
~~~ ~~~ ~~~ ~~~ ~~~  +  \\
~~~ ~~~ ~~~  +  ~~~ ~~~ \\
~~~ ~~~ ~~~ ~~~  +  ~~~ \\
~~~ ~~~  +  ~~~ ~~~ ~~~ \\
~~~  +  ~~~ ~~~ ~~~ ~~~
\end{pmatrix},
\begin{pmatrix}
~~~ ~~~ ~~~ ~~~ ~~~  +  \\
~~~ ~~~  +  ~~~ ~~~ ~~~ \\
~~~  +  ~~~ ~~~ ~~~ ~~~ \\
~~~ ~~~ ~~~  +  ~~~ ~~~ \\
 +  ~~~ ~~~ ~~~ ~~~ ~~~ \\
~~~ ~~~ ~~~ ~~~  +  ~~~
\end{pmatrix},
\begin{pmatrix}
~~~ ~~~  +  ~~~ ~~~ ~~~ \\
~~~  +   -  ~~~  +  ~~~ \\
 +   -  ~~~ ~~~ ~~~  +  \\
~~~ ~~~  +  ~~~ ~~~ ~~~ \\
~~~  +   -   +  ~~~ ~~~ \\
~~~ ~~~  +  ~~~ ~~~ ~~~
\end{pmatrix}, \right. } \\
{\footnotesize \left. \begin{pmatrix}
~~~ ~~~ ~~~  +  ~~~ ~~~ \\
~~~ ~~~  +  ~~~ ~~~ ~~~ \\
~~~ ~~~ ~~~ ~~~  +  ~~~ \\
~~~  +  ~~~ ~~~ ~~~ ~~~ \\
~~~ ~~~ ~~~ ~~~ ~~~  +  \\
 +  ~~~ ~~~ ~~~ ~~~ ~~~ 
\end{pmatrix} ,
\begin{pmatrix}
~~~ ~~~ ~~~ ~~~  +  ~~~ \\
~~~ ~~~ ~~~  +  ~~~ ~~~ \\
~~~  +  ~~~ ~~~ ~~~ ~~~ \\
 +  ~~~ ~~~ ~~~ ~~~ ~~~ \\
~~~ ~~~  +  ~~~ ~~~ ~~~ \\
~~~ ~~~ ~~~ ~~~ ~~~  +  
\end{pmatrix} ,
\begin{pmatrix}
~~~  +  ~~~ ~~~ ~~~ ~~~ \\
 +  ~~~ ~~~ ~~~ ~~~ ~~~ \\
~~~ ~~~  +  ~~~ ~~~ ~~~ \\
~~~ ~~~ ~~~ ~~~ ~~~  +  \\
~~~ ~~~ ~~~ ~~~  +  ~~~ \\
~~~ ~~~ ~~~  +  ~~~ ~~~
\end{pmatrix} \right) \qquad} \end{multline*}
\vspace{-5mm}
\begin{multline*} {\footnotesize \qquad \left( \begin{pmatrix}
 +  ~~~ ~~~ ~~~ ~~~ ~~~ \\
~~~ ~~~ ~~~ ~~~  +  ~~~ \\
~~~  +  ~~~ ~~~ ~~~ ~~~ \\
~~~ ~~~  +  ~~~ ~~~ ~~~ \\
~~~ ~~~ ~~~ ~~~ ~~~  +  \\
~~~ ~~~ ~~~  +  ~~~ ~~~ 
\end{pmatrix},
\begin{pmatrix}
~~~  +  ~~~ ~~~ ~~~ ~~~ \\
~~~ ~~~  +  ~~~ ~~~ ~~~ \\
 +  ~~~ ~~~ ~~~ ~~~ ~~~ \\
~~~ ~~~ ~~~ ~~~  +  ~~~ \\
~~~ ~~~ ~~~  +  ~~~ ~~~ \\
~~~ ~~~ ~~~ ~~~ ~~~  +  
\end{pmatrix} ,
\begin{pmatrix}
~~~ ~~~ ~~~ ~~~  +  ~~~ \\
 +  ~~~ ~~~ ~~~  -   +  \\
~~~ ~~~ ~~~ ~~~  +  ~~~ \\
~~~ ~~~ ~~~  +  ~~~ ~~~ \\
~~~  +  ~~~ ~~~ ~~~ ~~~ \\
~~~ ~~~  +  ~~~ ~~~ ~~~ 
\end{pmatrix} , \right.} \\
{\footnotesize \left.\begin{pmatrix}
~~~ ~~~  +  ~~~ ~~~ ~~~ \\
~~~ ~~~ ~~~  +  ~~~ ~~~ \\
~~~ ~~~ ~~~ ~~~ ~~~  +  \\
~~~  +  ~~~ ~~~ ~~~ ~~~ \\
 +   -  ~~~ ~~~  +  ~~~ \\
~~~  +  ~~~ ~~~ ~~~ ~~~ 
\end{pmatrix},
\begin{pmatrix}
~~~ ~~~ ~~~  +  ~~~ ~~~ \\
~~~  +  ~~~  -   +  ~~~ \\
~~~ ~~~ ~~~  +  ~~~ ~~~ \\
 +  ~~~ ~~~ ~~~  -   +  \\
~~~ ~~~  +  ~~~ ~~~ ~~~ \\
~~~ ~~~ ~~~ ~~~  +  ~~~ 
\end{pmatrix},
\begin{pmatrix}
~~~ ~~~ ~~~ ~~~ ~~~  +  \\
~~~ ~~~ ~~~  +  ~~~ ~~~ \\
~~~ ~~~  +  ~~~ ~~~ ~~~ \\
~~~ ~~~ ~~~ ~~~  +  ~~~ \\
~~~  +  ~~~ ~~~ ~~~ ~~~ \\
 +  ~~~ ~~~ ~~~ ~~~ ~~~ 
\end{pmatrix} \right) \qquad} \end{multline*}
\caption{Pair of orthogonal Latin-like squares of order~$6$ with their corresponding alternating sign hypermatrices, from~\cite{BrualdiDahl18}.}
\label{fig:orthog_6}
\end{figure}

\subsection{Italian squares and orthogonality}

Just as each Latin square is in simple bijection with a permutation hypermatrix, we propose the following definition of an \emph{Italian square}, adapted from \cite{Obrien24}, which correspond to an ASHM by the same simple bijection.

\begin{definition}
An \emph{Italian square} of order $n$ is an $n \times n$ array, with each cell containing a signed formal sum of a subset of symbols from $[n]$, such that each formal sum in ascending order, and each symbol in a row or column, alternates in sign beginning and ending with $+$. 
\end{definition}

We denote the ASM corresponding to symbol $k \in [n]$ in the Italian square $L$ by $L_k$ and write $L=(L_1,L_2,\ldots, L_n)$.

\begin{example} An ASHM $(L_1, L_2, L_3)$ and its corresponding Italian square $L$.
\[ \left( \begin{pmatrix}
    + &   &   \\
      & + &   \\
      &   & +
\end{pmatrix},
\begin{pmatrix}
      & + &   \\
    + & - & + \\
      & + & 
\end{pmatrix},
\begin{pmatrix}
      &   & + \\
      & + &   \\
    + &   & 
\end{pmatrix} \right) \iff \begin{array}{|c|c|c|}\hline
    1 & 2 & 3 \\\hline
    2 & \SmallArray{1-2\\+3} & 2 \\\hline
    3 & 2 & 1 \\\hline
\end{array} \]
\end{example}

This is motivated by the observation that Brualdi and Dahl's definition of orthogonality is not consistent for Latin-like squares, but is consistent when instead applied to Italian squares. This arises due to the fact that different ASHMs can have the same Latin-like square, as demonstrated in \cite{Obrien20}.

\begin{example}\label{inconsistency}
    The following are three Italian squares, respectively denoted $A$, $B$, and $C$ (where $C$ is also a Latin square).
\[ {\small \begin{array}{|c|c|c|c|c|}\hline
    1 & 2 & 3 & 4 & 5 \\\hline
    2 & \SmallArray{1-2\\+3} & \SmallArray{2-3\\+5} & 3 & 4 \\\hline
    3 & \SmallArray{2-3\\+4} & \SmallArray{1-2\\+3} & \SmallArray{2-3\\+5} & 3 \\\hline
    5 & 3 & \SmallArray{2-3\\+4} & \SmallArray{1-2\\+3} & 2 \\\hline
    4 & 5 & 3 & 2 & 1 \\\hline
\end{array} \qquad
\begin{array}{|c|c|c|c|c|}\hline
    1 & 2 & 3 & 4 & 5 \\\hline
    2 & \SmallArray{1-2\\+3} & 4 & \SmallArray{2-4\\+5} & 4 \\\hline
    3 & \SmallArray{2-3\\+4} & \SmallArray{1-4\\+5} & 4 & 3 \\\hline
    5 & 3 & \SmallArray{2-3\\+4} & \SmallArray{1-2\\+3} & 2 \\\hline
    4 & 5 & 3 & 2 & 1 \\\hline
\end{array} \qquad
\begin{array}{|c|c|c|c|c|}\hline
    1 & 2 & 3 & 4 & 5 \\\hline
    4 & 5 & 2 & 3 & 1 \\\hline
    5 & 3 & 4 & 1 & 2 \\\hline
    3 & 1 & 5 & 2 & 4 \\\hline
    2 & 4 & 1 & 5 & 3 \\\hline
\end{array}} \]
    By Brualdi and Dahl's definition of orthogonality, the Latin-like square $L(A)$ is orthogonal to $C$, while $L(B)$ is not. But
    \[ L(A) = L(B)=
\begin{array}{|c|c|c|c|c|}\hline
    1 & 2 & 3 & 4 & 5 \\\hline
    2 & 2 & 4 & 3 & 4 \\\hline
    3 & 3 & 2 & 4 & 3 \\\hline
    5 & 3 & 3 & 2 & 2 \\\hline
    4 & 5 & 3 & 2 & 1 \\\hline
\end{array} ~. \]
\end{example}

We therefore propose the following alternative definition of orthogonality.

\begin{definition}
    Italian squares $A$ and $B$ of order $n$ are orthogonal if $\langle A_i, B_j \rangle = 1$ for all $i,j \in[n]$,where $A_i$ and $B_j$ are the planes of $A$ and $B$, respectively. A set of \emph{pairwise orthogonal Italian squares (POIS)} is defined as expected.
\end{definition}

In the Example \ref{inconsistency}, we would then say $A$ is orthogonal to $C$, while $B$ is not.

\begin{example}\label{ex:noextend}
    The following Latin square is orthogonal to both of the non-Latin Italian squares (which are not orthogonal to each other).
\[ \begin{array}{|c|c|c|c|}\hline
    1 & 3 & 2 & 4 \\\hline
    3 & \SmallArray{2-3\\+4} & \SmallArray{1-2\\+3} & 2 \\\hline
    2 & \SmallArray{1-2\\+3} & \SmallArray{2-3\\+4} & 3 \\\hline
    4 & 2 & 3 & 1 \\\hline
\end{array} ~\perp~
\begin{array}{|c|c|c|c|}\hline
    1 & 2 & 3 & 4 \\\hline
    2 & 1 & 4 & 3 \\\hline
    4 & 3 & 2 & 1 \\\hline
    3 & 4 & 1 & 2 \\\hline
\end{array} ~\perp~
\begin{array}{|c|c|c|c|}\hline
    1 & 4 & 2 & 3 \\\hline
    3 & 2 & \SmallArray{1-2\\+4} & 2 \\\hline
    2 & \SmallArray{1-2\\+3} & 2 & 4 \\\hline
    4 & 2 & 3 & 1 \\\hline
\end{array} \]

    By exhaustive computer search, these are the unique examples (up to equivalence under symmetry of the unit cube) of a pair of order $4$ which cannot be extended to a triple of POIS.
\end{example}

\begin{example}\label{6x6}
The following are the Italian squares corresponding to Brualdi and Dahl's $6 \times 6$ construction, see Figure~\ref{fig:orthog_6}.
\[ \begin{array}{|c|c|c|c|c|c|}\hline
1 & 6 & 3 & 4 & 5 & 2 \\\hline
6 & 3 & \SmallArray{2-3\\+4} & 5 & 3 & 1 \\\hline
3 & \SmallArray{2-3\\+5} & 6 & 1 & 4 & 3 \\\hline
5 & 4 & 3 & 2 & 1 & 6 \\\hline
2 & 3 & \SmallArray{1-3\\+5} & 3 & 6 & 4 \\\hline
4 & 1 & 3 & 6 & 2 & 5 \\\hline
\end{array} \quad\perp\quad
\begin{array}{|c|c|c|c|c|c|}\hline
1 & 2 & 4 & 5 & 3 & 6 \\\hline
3 & 5 & 2 & \SmallArray{4-5\\+6} & \SmallArray{1-3\\+5} & 3 \\\hline
2 & 1 & 6 & 5 & 3 & 4 \\\hline
5 & 4 & 1 & 3 & \SmallArray{2-5\\+6} & 5 \\\hline
4 & \SmallArray{3-4\\+6} & 5 & 2 & 4 & 1 \\\hline
6 & 4 & 3 & 1 & 5 & 2 \\\hline
\end{array} \]
\end{example}

\section{ASMs with an orthogonal mate}\label{sec:mate}

If ASMs~$A$ and~$B$ satisfy $\langle A, B \rangle = 1$, we say they are each other's \emph{orthogonal mate}. An example of an ASM without an orthogonal mate is the diamond ASM
\[ D_3 = \begin{pmatrix} 0 & 1 & 0 \\ 1 & \!-1\! & 1 \\ 0 & 1& 0 \end{pmatrix} . \]
This can be easily verified, as all $3\times3$ permutation matrices $A$ with a $+1$ in the $(2,2)$ position satisfy $\langle A, D_3 \rangle = -1$, and all without satisfy $\langle A, D_3 \rangle = 2$. The following are the only $2 \times 2$ ASMs. Neither are their own nor each other's orthogonal mate.
\[ I_2 = \begin{pmatrix} 1 & 0 \\ 0 & 1 \end{pmatrix} , \quad I^\prime_2 = \begin{pmatrix} 0 & 1 \\ 1 & 0 \end{pmatrix} \]

Brualdi and Dahl posed the following problems \cite[Prob.~46(a)]{BrualdiDahl18}.
\begin{enumerate}
\item[] Which $n \times n$ ASMs have an orthogonal ASM-mate? Which $n \times n$ ASMs have a permutation matrix as an orthogonal mate?
\end{enumerate}

The following result answers both of these questions. For the proof, we recall the definition of a \emph{T-block} \cite{BrualdiSchroeder17}, which is an $n \times n$ matrix $T_{i_1,j_1;i_2,j_2}$ for which the only non-zero entries are $+1$ in the $(i_1,j_1)$ and $(i_2,j_2)$ positions, and $-1$ in the $(i_1,j_2)$ and $(i_2,j_1)$ positions, $1 \le i_1,j_1,i_2,j_2 \le n$.

We also recall the Frobenius-K\"onig Theorem, which characterises when a given $n \times n$ $(0,1)$-matrix contains a permutation matrix (in the sense that there is a set of $n$ entries equal to 1 which occupy distinct rows and columns). Equivalent results were proven independently by Frobenius, K\"onig, and Hall \cite{Hall35}. We refer to \cite{Schneider77} for a more detailed historical account.

\begin{theorem}\label{thm: ortho mate}
An ASM has an orthogonal mate if and only if it is not $I_2$, $I^\prime_2$, or $D_3$. Moreover, every ASM with an orthogonal mate is orthogonal to a permutation matrix.
\end{theorem}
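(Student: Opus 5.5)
The plan is to produce directly a permutation matrix $P$, with permutation $\sigma$, such that $\langle A,P\rangle=\sum_i A_{i\sigma(i)}=1$. This proves both claims at once, since such a $P$ is itself an ASM. The converse direction has already been checked above: $I_2$, $I'_2$ and $D_3$ have no mate.

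A useful heuristic comes from averaging over all $n!$ permutations. Each position is hit by exactly $(n-1)!$ of them, so the average of $\langle A,P\rangle$ is $\frac1n\sum_{i,j}A_{ij}=1$. Hence the value $1$ is the ``expected'' one. If it were never attained, $\langle A,P\rangle$ would take values both $\le 0$ and $\ge 2$.

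Two permutation matrices differing by a transposition differ by a T-block $T=T_{i_1,j_1;i_2,j_2}$. So one natural route is to walk from a permutation with $\langle A,P\rangle\ge2$ to one with $\langle A,P\rangle\le0$ by transpositions. Each step changes the value by $\langle A,T\rangle=A_{i_1j_1}+A_{i_2j_2}-A_{i_1j_2}-A_{i_2j_1}$. If all steps along a suitably chosen path have $|\langle A,T\rangle|\le1$, the value $1$ is hit. However, controlling the steps seems messy, so I would use this only as a fallback.

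The main approach is via Frobenius--K\"onig. Fix a $+1$ entry of $A$ at $(i_0,j_0)$ and let $B$ be the $(0,1)$-matrix with $B_{ij}=1$ if and only if $A_{ij}=0$ or $(i,j)=(i_0,j_0)$. Any permutation contained in $B$ that uses $(i_0,j_0)$ gives $\langle A,P\rangle=1$. Equivalently, the $(n-1)\times(n-1)$ submatrix $B'$ obtained by deleting row $i_0$ and column $j_0$ must contain a permutation matrix.

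By Frobenius--K\"onig this fails exactly when $B'$ has an $r\times s$ zero submatrix with $r+s\ge n$. Such a submatrix is an $r\times s$ block of rows and columns, avoiding row $i_0$ and column $j_0$, on which every entry of $A$ is nonzero. So I would prove a structural lemma on ASMs. If $R\times C$ is a set of rows and columns on which $A$ is entirely nonzero, then alternation forces the entries of each row restricted to $C$ (and each column restricted to $R$) to alternate. Moreover, the sign pattern of the block is a checkerboard, and since rows start and end with $+1$ there must be additional nonzero entries outside the block. Counting nonzero entries, as in the bound $1,3,5,\dots,5,3,1$ on nonzeros per row, then gives $r+s\le n-1$ except in very special configurations. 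The lemma I aim for is: an ASM with such a large nonzero block, for every choice of $+1$ entry $(i_0,j_0)$, is close to a diamond.

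The remaining work is a finite case analysis of these obstructed configurations. In those cases I would instead look for $P$ meeting two $+1$'s and one $-1$, with the remaining entries zero. For example, for $D_n$ with $n\ge4$ I would construct such a $P$ explicitly using the corner zeros of the diamond. The small orders $n\le3$, where the exceptions live, I would check by hand.

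I expect the main obstacle to be the structural lemma: showing that a nonzero $r\times s$ block with $r+s\ge n$ avoiding the row and column of every $+1$ forces the diamond-like shape. The first thing I would try is to take such a block for a $+1$ entry in row $1$ (which contains exactly one nonzero). Row $1$ then lies outside the block, and I would argue by induction on $n$, deleting the first row and the column of its $+1$.
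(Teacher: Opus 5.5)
\textbf{The proposal has a genuine gap.} Everything rests on the structural lemma, and that lemma is neither proved nor correctly set up.

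Your sketch assumes that on an all-nonzero block $R\times C$ each row restricted to $C$ alternates and the block is a checkerboard. This holds only when $R$ and $C$ are intervals. In general, other nonzeros sit between the chosen columns: row $3$ of $D_5$ restricted to columns $1,3$ reads $1,1$. The count $1,3,5,\dots$ gives only $r+s\le n+1$, whereas you need $r+s\le n-1$ for every block avoiding row $i_0$ and column $j_0$. Blocks with $r+s=n$ already occur in small cases, so the real difficulty is choosing $(i_0,j_0)$ so that all of them are broken simultaneously, and you do not address this.

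Your suggested first move already fails at $n=4$. For the row-$1$ entry $(1,2)$ of $D_4$, column $3$ is nonzero in rows $2,3,4$, which is a $3\times 1$ obstruction; one has to use e.g.\ $(2,3)$ instead. The induction is also unavailable: deleting row $1$ and column $j_0$ does not leave an ASM (for $D_4$, what remains of row $2$ is $(1,1,0)$).

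More seriously, the class of ASMs in which every $+1$ is obstructed is not ``close to a diamond''. Write $n=2m-1$. If $n\equiv 3 \bmod 4$ and $A$ has a full central row and a full central column, then $A_{mm}=-1$ and every $+1$ at $(i_0,j_0)$ is blocked:
\begin{itemize}
\item if $i_0\ne m$, the central row minus column $j_0$ is a $1\times(n-1)$ nonzero block;
\item if $i_0=m$, the central column minus row $m$ is an $(n-1)\times 1$ block.
\end{itemize}
This family is much larger than $\{D_n\}$. For example, take the order-$7$ ASM whose nonzeros are the full alternating row and column $4$ together with $+1$'s at $(2,2),(2,6),(6,2),(6,6)$. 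Your fallback only promises an explicit $P$ for $D_n$.

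The missing idea is how to reach the ``two $+1$'s and one $-1$'' target in general, and this is exactly the paper's full-row case. Take the centre itself as the distinguished entry even though it is $-1$. Use Frobenius--K\"onig to get a permutation $P$ through $(m,m)$ that avoids all other nonzeros. Then perform two $T$-block switches at the entries of $P$ in columns $1$ and $n$ (whose only nonzero of $A$ lies in row $m$) to pick up two $+1$'s. When there is no full line, the paper instead looks for $P$ avoiding all nonzeros and raises $\langle A,P\rangle$ from $0$ to $1$ by a $T$-block switch.

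That step is itself not always available. Consider the $9\times 9$ ASM with
\begin{itemize}
\item rows $3,5,7$ equal to $(0,0,1,-1,1,-1,1,0,0)$;
\item rows $4$ and $6$ equal to $(1,0,-1,1,-1,1,-1,0,1)$ and $(0,1,-1,1,-1,1,-1,1,0)$;
\item single $1$'s at $(1,4),(2,6),(8,4),(9,6)$.
\end{itemize}
It has no full line but has an all-nonzero $5\times 5$ block on rows and columns $3$--$7$. Your target with $(i_0,j_0)=(5,5)$ does succeed there. So your idea has merit, but only once the case analysis is actually carried out.
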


\begin{proof}
There is a single $1 \times 1$ ASM, which is self-orthogonal. There are two $2 \times 2$ ASMs, $I_2$ and $I^\prime_2$, neither of which has an orthogonal mate. Each $3 \times 3$ permutation matrix $P$ has an orthogonal mate; any permutation resulting from swappping two rows of $P$. The only $3 \times 3$ ASM which is not a permutation matrix is $D_3$, which has no orthogonal mate.

Suppose $A$ has order $n\ge4$ and contains an entire row or column of non-zero entries. Then $n = 2m-1$ for some $m \in \mathbb{N}$, and we construct a permutation matrix $P$ for which $P_{mm} = 1$, and $P_{ij} = 0$ for all other $A_{ij}\ne0$. Without loss of generality, suppose $A$ contains an entire row of non-zero entries. The Frobenius-K\"onig Theorem implies that such a $P$ exists if and only if the matrix resulting from replacing the central entry of $A$ with 0 contains no $r \times s$ $(1,-1)$-submatrix for some $r+s=n+1$. Any set of $r$ rows contains a row at least $\lfloor\frac{r}{2}\rfloor$ from the central row. This row contains at least $r$ zeros, and therefore at most $n-r$ non-zeros. So any $r \times s$ $(1,-1)$-submatrix satisfies $r+s \le r+(n-r) = n$, and therefore such a $P$ exists.

If $n \equiv 1 \bmod 4$, then $A_{mm} = 1$, and $P$ is orthogonal to $A$. If $n \equiv 3 \bmod 4$, then $A_{mm} = -1$. In this case, there is some $i_1$ for which $P_{i_1,1} = 1$ and some $i_n$ for which $P_{i_n,n} = 1$. Let $j_1\ne m$ be the index of a $+1$ entry in row $i_1$ of $A$, and $j_n \ne m$ the index in row $i_n$. If the only non-zero in row $i_1$ of $A$ is in column $m$, replace $P$ with $P-T_{i_1,1;i,j}$, where $P_{i,j} = 1$, $i \ne m$, and there is a non-zero in row $i$ of $A$ which is not in column $m$. Relabel so that $i_1$ is the new row for which $P_{i_1,1} = 1$. Repeat similarly for $i_n$ if its only non-zero entry is in column $m$. Now let $i_2$ be the value for which $P_{i_2,j_1} = 1$, and $i_{n-1}$ for which $P_{i_{n-1},j_n} = 1$. The permutation $Q = P-T_{i_1,1;i_2,j_1}-T_{i_n,n;i_{n-1},j_n}$ has two $+1$ entries ($Q_{i_1,j_1}$ and $Q_{i_n,j_n}$) where $A$ has $+1$ and one $+1$ ($Q_{m,m}$) entry where $A$ has $-1$, and is therefore orthogonal to $A$.

Now suppose $A$ has order $n\ge4$ and does not contain an entire row or column of non-zero entries. First we show that there exists permutation matrix $P$ for which $A_{ij} = 0$ for all $P_{ij}=1$. Since the number of non-zero entries in row $i$ of a $A$ is bounded above by $\min(2i-1, 2n-2i+1)$ and $A$ is not an odd diamond, the number of zero entries in row $i$ of $A$ is at least $\max(1, n+1-2i, 2i-n-1)$. The Frobenius-K\"onig Theorem implies that such a $P$ exists if and only if each $r \times s$ submatrix of $A$ with $r+s=n+1$ contains a 0. Again, since $A$ does not contain a full row of non-zeros, any set of $r$ rows contains a row with at least $r$ zeros (and therefore at most $n-r$ non-zeros). So any $r \times s$ submatrix of $A$ with $r+s = n+1$ contains a 0, and therefore such a $P$ exists.

We now show that there is some $T$-block switch that can be made to $P$ so the resulting permutation is orthogonal to $A$. Let $j_1$ be such that $P_{1,j_1} = 1$, and suppose the unique non-zero entry in row 1 of $A$ is in column $j_2$. If $(j_1,j_2) \in \{(1, 2), (n, n \!-\! 1)\}$, then since $n \ge 4$, there is some $(i,j) \ne (1,j_1)$ such that $P_{ij} = 1$ and $A_{i1} = 0$. In this case, replace $P$ with the permutation $P - T_{1,j_1;i,j}$, and note that $\langle A, P \rangle = 0$.

Let $i_2$ be the row of $P$ containing a 1 in column $j_2$. If $A_{i_2,j_1} = 0$, we are done and $P-T_{1,j_1;i_2,j_2}$ is orthogonal to $A$. Otherwise, all $j \not\in \{j_1,j_2\}$ satisfy $A_{1j} = P_{1j} = 0$, and if any $i\not\in\{1,i_2\}$ satisfies $A_{i,j_1} = 1$, we are done and $P-T_{1,j_1;i,j}$ is orthogonal to $A$, where $P_{ij}=1$. Otherwise $A_{i_2,j_1}$ is the only non-zero in column $j_1$, and there is some $j_3 \in \{j_1-1,j_1+1\}$ such that $j_3\ne j_2$. Let $i_3$ be such that $P_{i_3,j_3} = 1$, and consider the permutation $Q = P-T_{1,j_1;i_3,j_3}$. Since columns $j_1$ and $j_3$ are next to one another, and $A_{i_2,j_1} = 1$, it follows that $A_{i_2,j_3} \ne 1$, and therefore there is some other row $i_4$ for which $A_{i_4, j_3} = 1$. Then $Q-T_{1,j_3;i_4,j_4}$ is orthogonal to $A$, where $j_4$ is the position of the non-zero in row $i_4$ of $Q$.
\end{proof}

\section{A Frobenius-K\"onig result for $(0,\pm1)$-matrices}\label{sec:frob}

The Frobenius-K\"onig Theorem characterises when a $(0,1)$-matrix contains a permutation matrix $P$ (meaning $P \le X$ entrywise). This is possible if and only if there is no $r\times s$ zero submatrix of $X$ with $r+s = n+1$. Brualdi and Dahl \cite{BrualdiDahl24} investigate similar problems for the class $\X_n$ of $n \times n$ $(0,\pm1)$-matrices, and prove a Frobenius-K\"onig type result for such matrices with all row and column sums equal to 1. Given $k \in \{1,-1\}$ and $X \in \X_n$, they denote by $X(k)$ the matrix obtained by replacing all $-k$ entries by 0, and pose the following related questions \cite[Question 1.3]{BrualdiDahl24}.
\begin{enumerate}
    \item[(I)] Given a $(0,1)$-matrix $X \in \X_n$, when does there exist an $n \times n$ ASM $A$ such that $A(1) = X$?
    \item[(II)] Given a $(0,-1)$-matrix $X \in \X_n$, when does there exist an $n \times n$ ASM $A$ such that $A(-1) = X$?
\end{enumerate}

We now answer these questions for ASMs and later for a wider class in $\X_n$.

\begin{definition}
    Let $X\in\X_n$ be a $(0,k)$-matrix, $k \in \{1,-1\}$. We define the \emph{row/column regions} of $X$ to be the contiguous 0 entries occupying a single row/column of $X$ such that the first (resp. last) entry in the region is either the first (resp. last) entry of the row/column or has a non-zero directly before (resp. after). If two non-zero entries are next to one another, we say there is an empty row/column region between them. We define type $k$ and $-k$ regions as follows.
    \begin{itemize}
        \item The type $k$ regions are the subset of row/column regions with the entry directly before and after the region equal to $k$.
        \item The type $-k$ regions are the entire set of row/column regions.
    \end{itemize}
\end{definition}

Note that this classification into type $k$ and $-k$ regions will be more useful later when generalising our result beyond ASMs. We wish to determine when a given $(0,k)$-matrix has an ASM $A$ with $A(k) = X$, and the region type is equal to the row/column sum in A. Therefore we are only interested in type 1 regions for ASMs. These are type $k$ regions for $k=1$, and type $-k$ regions for $k=-1$.

\begin{example}\label{regions}
The type 1 row and column regions of the following $(0,1)$-matrix $X \in \X_{12}$ are indicated. If $-1$ entries can be added to these regions such that exactly one is in each row region and one in each column region, the resulting matrix is an ASM.
\[ \begin{tikzpicture}
\matrix (A) [matrix of math nodes,
             left delimiter=(,
             right delimiter=),
             ampersand replacement=\&,
             row sep=0.2em,
             column sep=0.2em,
             font=\footnotesize] {
 0\& 0\& 0\& 1\& 0\& 0\& 0\& 0\& 0\& 0\& 0\& 0\\
 0\& 0\& 1\& 0\& 0\& 0\& 1\& 0\& 0\& 0\& 0\& 0\\
 0\& 0\& 0\& 0\& 1\& 0\& 0\& 0\& 0\& 0\& 0\& 0\\
 0\& 0\& 0\& 0\& 0\& 0\& 0\& 1\& 0\& 0\& 0\& 0\\
 0\& 0\& 0\& 0\& 0\& 0\& 0\& 0\& 0\& 1\& 0\& 0\\
 1\& 0\& 0\& 0\& 0\& 1\& 0\& 0\& 0\& 0\& 0\& 1\\
 0\& 0\& 0\& 1\& 0\& 0\& 0\& 0\& 1\& 0\& 0\& 0\\
 0\& 1\& 0\& 0\& 0\& 0\& 0\& 0\& 0\& 0\& 1\& 0\\
 0\& 0\& 1\& 0\& 0\& 0\& 0\& 0\& 0\& 0\& 0\& 0\\
 0\& 0\& 0\& 0\& 0\& 0\& 0\& 1\& 0\& 0\& 0\& 0\\
 0\& 0\& 0\& 0\& 0\& 0\& 0\& 0\& 0\& 1\& 0\& 0\\
 0\& 0\& 0\& 0\& 1\& 0\& 0\& 0\& 0\& 0\& 0\& 0\\
};
\node[left=4mm of A] {$X =$};
\node[draw, thick, fit=(A-2-4)(A-2-6), inner sep=0pt] {};
\node[draw, thick, fit=(A-6-2)(A-6-5), inner sep=0pt] {};
\node[draw, thick, fit=(A-6-7)(A-6-11), inner sep=0pt] {};
\node[draw, thick, fit=(A-7-5)(A-7-8), inner sep=0pt] {};
\node[draw, thick, fit=(A-8-3)(A-8-10), inner sep=0pt] {};
\node[draw, thick, fit=(A-3-3)(A-8-3), inner sep=0pt] {};
\node[draw, thick, fit=(A-2-4)(A-6-4), inner sep=0pt] {};
\node[draw, thick, fit=(A-4-5)(A-11-5), inner sep=0pt] {};
\node[draw, thick, fit=(A-5-8)(A-9-8), inner sep=0pt] {};
\node[draw, thick, fit=(A-6-10)(A-10-10), inner sep=0pt] {};
\end{tikzpicture} \]
\end{example}

\begin{example}\label{type 1 Y}
The type 1 row regions of the following $(0,-1)$-matrix $Y \in \X_9$ are indicated below. To obtain an ASM, a single $+1$ entry must be added to each type 1 row region in such a way that there is also a single $+1$ in each type 1 column region.

\[
\begin{tikzpicture}
\matrix (A) [matrix of math nodes,
             left delimiter=(,
             right delimiter=),
             ampersand replacement=\&,
             row sep=0.2em,
             column sep=0.2em,
             font=\footnotesize] {
 0\& 0\& 0\& 0\& 0\& 0\& 0\& 0\& 0\\
 0\& 0\& 0\& 0\& 0\& 0\& 0\& 0\& 0\\
 0\& 0\& 0\&\!-1\!\& 0\& 0\& 0\& 0\& 0\\
 0\& 0\& 0\& 0\& 0\& 0\& 0\& 0\& 0\\
 0\& 0\&\!-1\!\& 0\& 0\&\!-1\!\& 0\& 0\& 0\\
 0\& 0\& 0\& 0\& 0\& 0\& 0\& 0\& 0\\
 0\& 0\& 0\& 0\& 0\& 0\&\!-1\!\& 0\& 0\\
 0\& 0\& 0\& 0\& 0\&\!-1\!\& 0\& 0\& 0\\
 0\& 0\& 0\& 0\& 0\& 0\& 0\& 0\& 0\\
};
\node[left=4mm of A] {$Y =$};
\node[draw, thick, fit=(A-1-1)(A-1-9), inner sep=0pt] {};
\node[draw, thick, fit=(A-2-1)(A-2-9), inner sep=0pt] {};
\node[draw, thick, fit=(A-3-1)(A-3-3), inner sep=0pt] {};
\node[draw, thick, fit=(A-3-5)(A-3-9), inner sep=0pt] {};
\node[draw, thick, fit=(A-4-1)(A-4-9), inner sep=0pt] {};
\node[draw, thick, fit=(A-5-1)(A-5-2), inner sep=0pt] {};
\node[draw, thick, fit=(A-5-4)(A-5-5), inner sep=0pt] {};
\node[draw, thick, fit=(A-5-7)(A-5-9), inner sep=0pt] {};
\node[draw, thick, fit=(A-6-1)(A-6-9), inner sep=0pt] {};
\node[draw, thick, fit=(A-7-1)(A-7-6), inner sep=0pt] {};
\node[draw, thick, fit=(A-7-8)(A-7-9), inner sep=0pt] {};
\node[draw, thick, fit=(A-8-1)(A-8-5), inner sep=0pt] {};
\node[draw, thick, fit=(A-8-7)(A-8-9), inner sep=0pt] {};
\node[draw, thick, fit=(A-9-1)(A-9-9), inner sep=0pt] {};
\end{tikzpicture}
\]\end{example}

Let $X \in\X_n$ be a $(0,k)$-matrix for $k \in \{-1,1\}$. Define $\R$ and $\Ss$ to be the sets respectively containing the type 1 row regions and the type 1 column regions. If a single $-k$ can be added to each region in $\R$ such that there is also a single $-k$ in each region in $\Ss$, then the resulting matrix is an ASM. Clearly this is not possible if any region in $\R$ or $\Ss$ is empty, or if $|\R| \ne |\Ss|$. We say that a set of row regions and column regions has an \emph{overlap} if there is a row region and a column region among the sets with an entry in common. We now state our Frobenius-K\"onig type result for ASMs.

\begin{theorem}\label{f-k for ASMs}
    Let $X\in\X_n$ be a $(0,k)$-matrix for $k\in\{1,-1\}$ with respective type 1 row and column regions $\R$ and $\Ss$. Suppose no region in $\R$ or $\Ss$ is empty, and $\R = \Ss = m$. Then there exists an $n \times n$ ASM $A$ such that $A(k) = X$ if and only if there is no non-overlapping set of $r$ regions in $\R$ and $s$ regions in $\Ss$ with $r+s = m+1$. 
\end{theorem}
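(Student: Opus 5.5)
We reduce the statement to the Frobenius-K\"onig Theorem (equivalently Hall's theorem) applied to an auxiliary $m \times m$ $(0,1)$-matrix. Index the rows of this matrix $M$ by the regions in $\R$ and its columns by the regions in $\Ss$. Set $M_{R,S} = 1$ exactly when the row region $R$ and the column region $S$ overlap, i.e.\ share a cell of $X$. A row region and a column region share at most one cell, since a row and a column meet in a single position. So each $1$ of $M$ corresponds to a unique zero cell of $X$.

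The first step is to show that ASMs $A$ with $A(k) = X$ correspond exactly to permutation matrices $P \le M$. Given $A$, the entries equal to $-k$ that must be added lie in zero cells of $X$. In each row of $A$ the nonzero entries alternate, beginning and ending with $+1$. For $k=1$, this forces exactly one $-1$ strictly between consecutive $+1$'s of $X$ in that row, which is one per type 1 row region. No $-1$ can appear in a region bordered by the end of the row, as that would break the alternation at the boundary. For $k=-1$, it forces exactly one $+1$ in every row region of $X$, which are precisely the type $-k$ regions. The same holds for columns.

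Each added entry lies in exactly one row region and one column region. Moreover, every row region that must receive an entry is in $\R$, and every column region that must receive one is in $\Ss$. So each added entry sits in a cell lying in some $R\in\R$ and some $S\in\Ss$, and the placement gives a bijection $\R\to\Ss$, i.e.\ a permutation matrix $P\le M$. Conversely, given such $P$, place $-k$ at the common cell of each matched pair $(R,S)$. Every type 1 row and column region then receives exactly one new entry, and no other zero cell changes. We check that the result alternates correctly in every row and column, starting and ending with $+1$; this uses the observation that regions outside $\R$ (respectively $\Ss$) must receive nothing. Hence the result is an ASM $A$ with $A(k)=X$. Empty regions are excluded by hypothesis, so every region in $\R$ and $\Ss$ contains cells and this construction is meaningful.

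With this correspondence, the Frobenius-K\"onig Theorem says a permutation matrix $P\le M$ exists if and only if $M$ has no $r\times s$ zero submatrix with $r+s=m+1$. A zero submatrix on row set $\R'\subseteq\R$ and column set $\Ss'\subseteq\Ss$ means exactly that no region of $\R'$ overlaps a region of $\Ss'$, i.e.\ it is a non-overlapping set of $r$ regions in $\R$ and $s$ regions in $\Ss$. This yields the stated criterion. The hypothesis ``$\R=\Ss=m$'' should be read as $|\R|=|\Ss|=m$, so that $M$ is square.

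The main obstacle is the careful verification in the first step, especially for $k=-1$. There every row region is type $1$, including those touching the boundary of a row, so we must confirm that inserting one $+1$ into each region gives a sequence starting and ending with $+1$. We must also handle rows with no $-1$ at all, which form a single region spanning the whole row and receive exactly one $+1$. For $k=1$, the delicate points are rows containing zero or one $+1$ entries, and showing that boundary regions (adjacent to only one $+1$) receive no $-1$.
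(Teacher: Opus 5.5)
Your reduction is the same as the paper's: build the $m\times m$ overlap matrix between $\R$ and $\Ss$, apply Frobenius--K\"onig, and read zero submatrices as non-overlapping families. The paper simply asserts the first step, namely that $A$ exists iff exactly one $-k$ can be placed in each region of $\R$ and of $\Ss$. You try to verify it, but the check you defer as the ``main obstacle'' actually fails in the converse direction, so there is a genuine gap.

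For $k=1$, a row or column of $X$ containing no $1$ has no type~1 region. The matching therefore places nothing in it, and it remains a zero line, which no ASM has. For instance, take $n=4$ and
\[ X=\begin{pmatrix} 0&1&0&0\\ 1&0&1&0\\ 0&1&0&0\\ 0&0&0&0 \end{pmatrix}. \]
The only type~1 row region and the only type~1 column region are both $\{(2,2)\}$. So $m=1$, no region is empty, and the Frobenius--K\"onig condition holds. Your construction returns the $D_3$ pattern bordered by a zero row and column, and indeed no ASM $A$ with $A(1)=X$ exists. Already $n=1$, $X=(0)$, $m=0$ is a counterexample. So the ``if'' direction needs the extra hypothesis that, for $k=1$, every row and column of $X$ contains a $1$. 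With that hypothesis, a line with $p\ge 1$ ones receives exactly one $-1$ in each of its $p-1$ nonempty gaps and nothing at its ends, and your argument is complete.

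For $k=-1$ there is an analogous boundary issue. Your alternation check needs a region before the first and after the last $-1$ of each line. However, the paper declares an empty region only between two \emph{adjacent} nonzeros, so a $-1$ in the first or last position of a line produces no region there at all. Take $n=3$ with a single $-1$ at $(1,1)$. Then $\R$ and $\Ss$ each consist of three nonempty regions, and a perfect matching exists (e.g.\ placing $+1$ at $(1,2)$, $(2,1)$, $(3,3)$). Yet no ASM has a $-1$ in a corner. You must either count such boundary positions as empty regions, so that the hypothesis excludes them, or assume explicitly that no $-1$ of $X$ lies at the end of a line.

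The forward direction of your correspondence is fine in all cases; only the converse construction is affected. The paper's proof glosses over exactly the same point in its first sentence.
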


\begin{proof}
    There exists such an $A$ if and only if 0 entries can be replaced with $-k$ such that there is exactly one $-k$ in each row region of $\R$ and in each column region of $\Ss$.

    Number the regions in $\R$ and $\Ss$ each with $1, 2, \dots, m$, and construct the $m \times m$ $(0,1)$-matrix $Q$ with $Q_{ij} = 1$ if and only if $\R_i$ has an entry in common with $\Ss_j$. By the Frobenius-K\"onig Theorem, $Q$ contains an $m \times m$ permutation matrix $P$ if and only if $Q$ has no $r \times s$ zero submatrix with $r+s = m+1$. An $r \times s$ zero submatrix of $Q$ corresponds to a set of $r$ regions in $\R$ and $s$ regions in $\Ss$ with no overlap.
    
    Finding such a $P$ allows us to construct the required ASM $A$ as follows. Replace the 0 entry at the intersection of $\R_i$ and $\Ss_j$ if and only if $P_{ij} = 1$. Since $P$ is a permutation matrix, this means there is exactly one replacement made in each region of $\R$ and each region of $\Ss$. We can therefore reinterpret the Frobenius-K\"onig Theorem here to say that such an $A$ exists if and only if there is no non-overlapping set of $r$ regions in $\R$ and $s$ regions in $\Ss$ with $r+s=m+1$.
\end{proof}

Note that the number of such ASMs $A$ satisfying $A(k) = X$ is equal to the permanent of the matrix $Q$ defined in the proof of Theorem \ref{f-k for ASMs}.

Consider~$X$ from Example~\ref{regions}. Numbering the type~1 regions in order from left to right, then down, gives the following~$Q$ and a suitable ASM~$A$. The~$-1$ entries of~$A$ and corresponding non-zero entries of~$P\le Q$ are indicated in red.
\[ {\footnotesize \arraycolsep=3pt \begin{pmatrix}
    0 & {\color{red}1} & 0 & 0 & 0 \\
    {\color{red}1} & 1 & 1 & 0 & 0 \\
    0 & 0 & 0 & 1 & {\color{red}1} \\
    0 & 0 & 1 & {\color{red}1} & 0 \\
    1 & 0 & {\color{red}1} & 1 & 1
\end{pmatrix} , \qquad
\begin{pmatrix}
 & & & +& & & & & & & & \\
 & & +&{\color{red}-}& & & +& & & & & \\
 & & & & +& & & & & & & \\
 & & & & & & & +& & & & \\
 & & & & & & & & & +& & \\
 +& &{\color{red}-}& & & +& & & &{\color{red}-}& & +\\
 & & & +& && & {\color{red}-}& +& & & \\
 & +& & &{\color{red}-}& & & & & & +& \\
 & & +& & & & & & & & & \\
 & & & & & & & +& & & & \\
 & & & & & & & & & +& & \\
 & & & & +& & & & & & &
\end{pmatrix}} \]

The type~1 regions of~$Y$ from Example~\ref{regions} gives the following~$Q$, with positive entries of permutation~$P\le Q$ and $A$ indicated in red.
\[ {\footnotesize \arraycolsep=3pt \begin{pmatrix}
    1 & 1 & 1 & 0 & {\color{red}1} & 0 & 1 & 1 & 0 & 0 & 1 & 0 & 1 & 1 \\
    1 & {\color{red}1} & 1 & 0 & 1 & 0 & 1 & 1 & 0 & 0 & 1 & 0 & 1 & 1 \\
    1 & 1 & {\color{red}1} & 0 & 0 & 0 & 0 & 0 & 0 & 0 & 0 & 0 & 0 & 0 \\
    0 & 0 & 0 & 0 & 0 & 0 & {\color{red}1} & 1 & 0 & 0 & 1 & 0 & 1 & 1 \\
    1 & 1 & 1 & 0 & 0 & 1 & 1 & {\color{red}1} & 0 & 0 & 1 & 0 & 1 & 1 \\
    {\color{red}1} & 1 & 0 & 0 & 0 & 0 & 0 & 0 & 0 & 0 & 0 & 0 & 0 & 0 \\
    0 & 0 & 0 & 0 & 0 & {\color{red}1} & 1 & 0 & 0 & 0 & 0 & 0 & 0 & 0 \\
    0 & 0 & 0 & 0 & 0 & 0 & 0 & 0 & 0 & 0 & {\color{red}1} & 0 & 1 & 1 \\
    1 & 1 & 0 & 1 & 0 & 1 & 1 & 0 & 1 & 0 & 1 & 0 & {\color{red}1} & 1 \\
    1 & 1 & 0 & 1 & 0 & 1 & 0 & 0 & {\color{red}1} & 0 & 0 & 0 & 0 & 0 \\
    0 & 0 & 0 & 0 & 0 & 0 & 0 & 0 & 0 & 0 & 0 & 0 & 1 & {\color{red}1} \\
    1 & 1 & 0 & {\color{red}1} & 0 & 1 & 1 & 0 & 0 & 0 & 0 & 0 & 0 & 0 \\
    0 & 0 & 0 & 0 & 0 & 0 & 0 & 0 & 0 & 0 & 0 & {\color{red}1} & 1 & 1 \\
    1 & 1 & 0 & 1 & 0 & 1 & 1 & 0 & 0 & {\color{red}1} & 0 & 1 & 1 & 1
\end{pmatrix} , \qquad \begin{pmatrix}
 & & & {\color{red}+}& & & & & \\
 & {\color{red}+}& & & & & & & \\
 & & {\color{red}+}&-& {\color{red}+}& & & & \\
 & & & & & {\color{red}+}& & & \\
 {\color{red}+}& &-& {\color{red}+}& &-& {\color{red}+}& & \\
 & & & & & & & {\color{red}+}& \\
 & & & & & {\color{red}+}&-& & {\color{red}+}\\
 & & {\color{red}+}& & &-& {\color{red}+}& & \\
 & & & & & {\color{red}+}& & & 
\end{pmatrix}} \]

We now consider the same Frobenius-K\"onig type problem for the wider class $\W_n$ consisting of all matrices in $\X_n$ in which the non-zero entries of each row/column alternate in sign, and the sum of each row/column is in $\{0,\pm1\}$. Note that ASMs are exactly the elements of $\W_n$ with row-sum vector $R = (1,1,\dots,1)$ and column-sum vector $S = (1,1,\dots,1)$.

In addition to type $k$ and type $-k$ regions, we define the set of \emph{type 0} regions, which contains each type $k$ region, and the union of the first and last row/column region if they are disjoint.
\begin{example}\label{type 0 for Y}
Below is the $(0,-1)$-matrix $Y \in \X_9$ from Example \ref{type 1 Y}, instead with the type 0 row and column regions indicated. To obtain a matrix in $\W_9$ with all row sums 0, a single $+1$ must be added to each type 0 row region.
\[
\begin{tikzpicture}
\matrix (A) [matrix of math nodes,
             left delimiter=(,
             right delimiter=),
             ampersand replacement=\&,
             row sep=0.2em,
             column sep=0.2em,
             font=\footnotesize] {
 0\& 0\& 0\& 0\& 0\& 0\& 0\& 0\& 0\\
 0\& 0\& 0\& 0\& 0\& 0\& 0\& 0\& 0\\
 0\& 0\& 0\&\!-1\!\& 0\& 0\& 0\& 0\& 0\\
 0\& 0\& 0\& 0\& 0\& 0\& 0\& 0\& 0\\
 0\& 0\&\!-1\!\& 0\& 0\&\!-1\!\& 0\& 0\& 0\\
 0\& 0\& 0\& 0\& 0\& 0\& 0\& 0\& 0\\
 0\& 0\& 0\& 0\& 0\& 0\&\!-1\!\& 0\& 0\\
 0\& 0\& 0\& 0\& 0\&\!-1\!\& 0\& 0\& 0\\
 0\& 0\& 0\& 0\& 0\& 0\& 0\& 0\& 0\\
};
\node[left=4mm of A] {$Y =$};

\newcommand{\fullbox}[4]{
  \draw[thick] (A-#1-#2.north west) rectangle (A-#3-#4.south east);
}

\newcommand{\leftopen}[4]{
  \draw[thick] (A-#1-#2.north west) -- (A-#3-#4.north east);
  \draw[thick] (A-#1-#2.south west) -- (A-#3-#4.south east);
  \draw[thick] (A-#3-#4.north east) -- (A-#3-#4.south east);
}

\newcommand{\rightopen}[4]{
  \draw[thick] (A-#1-#2.north west) -- (A-#3-#4.north east);
  \draw[thick] (A-#1-#2.south west) -- (A-#3-#4.south east);
  \draw[thick] (A-#1-#2.north west) -- (A-#1-#2.south west);
}

\newcommand{\topopen}[4]{
  \draw[thick] (A-#1-#2.north west) -- (A-#3-#4.south west);
  \draw[thick] (A-#1-#2.north east) -- (A-#3-#4.south east);
  \draw[thick] (A-#3-#4.south west) -- (A-#3-#4.south east);
}

\newcommand{\bottomopen}[4]{
  \draw[thick] (A-#1-#2.north west) -- (A-#1-#2.north east);
  \draw[thick] (A-#1-#2.north west) -- (A-#3-#4.south west);
  \draw[thick] (A-#1-#2.north east) -- (A-#3-#4.south east);
}

\leftopen  {3}{1}{3}{3}
\rightopen {3}{5}{3}{9}
\leftopen  {5}{1}{5}{2}
\fullbox   {5}{4}{5}{5}
\rightopen {5}{7}{5}{9}
\leftopen  {7}{1}{7}{6}
\rightopen {7}{8}{7}{9}
\leftopen  {8}{1}{8}{5}
\rightopen {8}{7}{8}{9}

\topopen    {1}{3}{4}{3}
\bottomopen {6}{3}{9}{3}

\topopen    {1}{4}{2}{4}
\bottomopen {4}{4}{9}{4}

\topopen    {1}{6}{4}{6}
\fullbox    {6}{6}{7}{6}
\bottomopen {9}{6}{9}{6}

\topopen    {1}{7}{6}{7}
\bottomopen {8}{7}{9}{7}

\end{tikzpicture}
\]\end{example}

Let $X \in\X_n$ be a $(0,k)$-matrix for $k \in \{-1,1\}$, and let $R$ and $S$ be vectors with entries in $\{0,\pm1\}$. Now define $\R$ and $\Ss$ to be the sets respectively containing the type $R_i$ regions in row $i$ and the type $S_j$ regions in column $j$ of $X$, for $i,j \in [n]$. If a single $-k$ can be added to each region in $\R$ such that there is also a single $-k$ in each region in $\Ss$, then the resulting matrix is in $\W_n$, such that row $i$ sums to $R_i$ for all $i \in [n]$, and column $j$ sums to $S_i$ for all $j \in [n]$. Clearly this is not possible if any region in $\R$ or $\Ss$ is empty, if $\sum R_i \ne \sum S_j $, or if $|\R| \ne |\Ss|$. We now state our more general Frobenius-K\"onig type result for matrices in $\W_n$.

\begin{corollary}\label{f-k for D_n}
    Let $X \in\X_n$ be a $(0,k)$-matrix for $k \in \{-1,1\}$, $R$ and $S$ be vectors with entries in $\{0,\pm1\}$, and $\R$ and $\Ss$ to be the sets respectively containing the type $R_i$ regions in row $i$ and the type $S_j$ regions in column $j$ of $X$, for $i,j \in [n]$. Suppose no region in $\R$ or $\Ss$ is empty, $\sum R_i = \sum S_j$, and $|\R| = |\Ss| = m$. Then there exists $A \in \W_n$ with row sums $R$ and column sums $S$ such that $A(k) = X$ if and only if there is no non-overlapping set of $r$ regions in $\R$ and $s$ regions in $\Ss$ with $r+s = m+1$. 
\end{corollary}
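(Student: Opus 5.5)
The argument follows the proof of Theorem~\ref{f-k for ASMs}. The work lies in a first step: checking that placing exactly one $-k$ in each region of $\R$ and one in each region of $\Ss$ (and nowhere else) is equivalent to producing some $A\in\W_n$ with row sums $R$, column sums $S$ and $A(k)=X$. Once this is established, the bipartite/permanent argument transfers unchanged.

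Fix a row $i$ with non-zero entries $k$ at some positions. The entries of $A$ that are not in $X$ are exactly the $-k$ entries. Alternation of signs then forces at most one $-k$ in each row region, and exactly one $-k$ in each interior region, since two consecutive $k$'s need a $-k$ between them. The only remaining freedom is at the two ends, and we split into cases according to $R_i$.

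If $R_i=k$, the row must begin and end with $k$, so the first and last regions contain no $-k$. The interior regions each contain exactly one $-k$, and these are precisely the type $k$ regions.

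If $R_i=-k$, the row must begin and end with $-k$, so every region, the end regions included, contains exactly one $-k$. These are the type $-k$ regions. (If the row of $X$ is zero, the single region is the whole row.)

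If $R_i=0$, the row starts with one sign and ends with the other. So the interior regions each receive one $-k$, and exactly one of the two end regions receives a $-k$. This is the same as placing exactly one $-k$ in the union of the first and last regions, i.e.\ in the extra type $0$ region. If the row of $X$ is empty there are no disjoint end regions and the row must be all zero; we would check that this matches the definition, with the convention that such a row contributes no region, and note it as a subtle point. Similarly, when $X$ has non-zero entries but the two end regions are empty (the row starts and ends with $k$), we need to confirm the definition behaves correctly; otherwise the hypothesis ``no region is empty'' excludes the case. Columns are handled identically.

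Hence the admissible matrices $A$ correspond exactly to choices of cells, one per region in $\R\cup\Ss$, such that each chosen cell lies in exactly one row region of $\R$ and one column region of $\Ss$, and every $-k$ lies in such a pair. We also use that a cell lies in at most one row region, so $-k$ entries placed for distinct pairs cannot clash. The hypothesis $\sum R_i=\sum S_j$ is consistent with this but is not used beyond $|\R|=|\Ss|=m$.

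For the remaining step, number the regions, form the $m\times m$ $(0,1)$-matrix $Q$ with $Q_{ij}=1$ iff $\R_i$ and $\Ss_j$ share a cell, and apply the Frobenius--K\"onig Theorem. Permutation matrices $P\le Q$ correspond to valid placements, and $r\times s$ zero submatrices of $Q$ with $r+s=m+1$ correspond to non-overlapping families of $r$ regions in $\R$ and $s$ regions in $\Ss$. This yields the stated equivalence. I expect the main obstacle to be the case analysis for $R_i=0$ together with the degenerate all-zero and empty-end-region conventions, rather than the matching argument itself.
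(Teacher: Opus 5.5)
Your approach is the paper's. Its proof just says the argument is identical to that of Theorem~\ref{f-k for ASMs}: reduce to placing one $-k$ per region of $\R$ and of $\Ss$, then apply Frobenius--K\"onig to the overlap matrix $Q$. Your line-by-line case analysis supplies exactly the justification of that reduction which the paper leaves implicit.

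That analysis is correct except in one case, where it genuinely fails. Under $R_i=k$ you assume row $i$ of $X$ contains a $k$. Suppose instead that row $i$ of $X$ is zero and $R_i=k$. Then no admissible row exists, since an alternating row with entries in $\{0,-k\}$ has sum $0$ or $-k$. Yet row $i$ contributes no type $k$ region to $\R$, so a placement of one $-k$ per region leaves that row zero, with sum $0\neq R_i$. The same happens for a zero column with $S_j=k$.

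This is not merely a hole in your write-up: the statement itself, and Theorem~\ref{f-k for ASMs} for $k=1$, is false in this case. Take $k=1$, $R=S=(1,1,1,1)$, and $X=D_3(1)\oplus(0)$, i.e.\ the $(0,1)$-pattern of $D_3$ bordered by a zero fourth row and column. The only type~$1$ regions are the row region $\{(2,2)\}$ and the column region $\{(2,2)\}$. So $m=1$, they overlap, and the criterion holds. But any $A$ with $A(1)=X$ has no positive entry in row $4$, so it cannot have row sum $1$ there.

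The paper's one-line proof has the same blind spot, so the repair belongs in the hypotheses. Require that every row $i$ with $R_i=k$ and every column $j$ with $S_j=k$ contains a non-zero entry of $X$. Under that assumption your case analysis is complete and the rest of your argument goes through verbatim. You also get $|\R|=N-k\sum_i R_i$ and $|\Ss|=N-k\sum_j S_j$, where $N$ is the number of non-zero entries of $X$ and empty regions are counted. This confirms your remark that $\sum_i R_i=\sum_j S_j$ is equivalent to $|\R|=|\Ss|$ and adds nothing further.
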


\begin{proof}
    The proof is identical to that of Theorem \ref{f-k for ASMs}, simply replacing ``ASM $A$" with ``$A \in \W_n$".
\end{proof}

The type 0 regions of $Y$ from Example \ref{type 0 for Y} give the following $Q$, with positive entries of permutation $P\le Q$ and $A$ indicated in red. While the non-zero entries in each row and column of $A$ alternate in sign, the last non-zero in each row and column is the opposite to the first. Since all row and column sums are 0, the alternating condition continues cycling around each row and column, and such matrices could therefore be considered \emph{toroidal ASMs}.
\[ Q = \arraycolsep=4pt \begin{pmatrix}
    1 & 0 & {\color{red}1} & 0 & 1 \\
    0 & 0 & 0 & 0 & {\color{red}1} \\
    0 & {\color{red}1} & 0 & 0 & 0 \\
    1 & 1 & 0 & {\color{red}1} & 0 \\
    {\color{red}1} & 1 & 0 & 0 & 1
\end{pmatrix} , \quad
A = \begin{pmatrix}
 & & & & & & & & \\
 & & & & & & & & \\
 & & &-& & {\color{red}+}& & & \\
 & & & & & & & & \\
 & &-& {\color{red}+}& &-& {\color{red}+}& & \\
 & & & & & & & & \\
 & & & & & {\color{red}+}&-& & \\
 & & {\color{red}+}& & &-& & & \\
 & & & & & & & &
\end{pmatrix} \]

ASMs and toroidal ASMs have all row and column sums equal to one another, but we may also apply Corollary \ref{f-k for D_n} to find matrices with varying row and column sums. Similar, but not equivalent, generalisations of ASMs have previously been studied by Brualdi and Kim \cite{BrualdiKim14}.

\begin{example}
    Suppose we have the following $X \in \X_4$, and require row sums $R = (0, -1, +1, -1)$ and column sums $S = (0, -1, 0, 0)$. This means that we need to identify the type 0 regions in row 1 and columns 1, 3, and 4, type 1 regions in row 3, and type -1 regions in column 2, and rows 2 and 4.
    \[\begin{tikzpicture}

\newcommand{\fullbox}[4]{
  \draw[thick] (A-#1-#2.north west) rectangle (A-#3-#4.south east);
}

\newcommand{\leftopen}[4]{
  \draw[thick] ([xshift=-5pt]A-#1-#2.north west) -- (A-#1-#4.north east);
  \draw[thick] ([xshift=-5pt]A-#3-#2.south west) -- (A-#3-#4.south east);
  \draw[thick] (A-#1-#4.north east) -- (A-#3-#4.south east);
}

\newcommand{\rightopen}[4]{
  \draw[thick] (A-#1-#2.north west) -- ([xshift=5pt]A-#1-#4.north east);
  \draw[thick] (A-#3-#2.south west) -- ([xshift=5pt]A-#3-#4.south east);
  \draw[thick] (A-#1-#2.north west) -- (A-#3-#2.south west);
}

\newcommand{\topopen}[4]{
  \draw[thick] ([yshift=5pt]A-#1-#2.north west) -- (A-#3-#2.south west);
  \draw[thick] ([yshift=5pt]A-#1-#4.north east) -- (A-#3-#4.south east);
  \draw[thick] (A-#3-#2.south west) -- (A-#3-#4.south east);
}

\newcommand{\bottomopen}[4]{
  \draw[thick] (A-#1-#2.north west) -- (A-#1-#4.north east);
  \draw[thick] (A-#1-#2.north west) -- ([yshift=-5pt]A-#3-#2.south west);
  \draw[thick] (A-#1-#4.north east) -- ([yshift=-5pt]A-#3-#4.south east);
}

\matrix (A) [matrix of math nodes,
             left delimiter=(,
             right delimiter=),
             ampersand replacement=\&,
             row sep=0.3em,
             column sep=0.3em,
             font=\small] {
 0 \& \!-1\! \& 0 \& 0 \\
 0 \& 0 \& 0 \& \!-1\! \\
 0 \& 0 \& \!-1\! \& 0 \\
 \!-1\! \& 0 \& 0 \& \!-1\! \\
};

\node[left=4mm of A] {$X =$};

\leftopen   {1}{1}{1}{1}
\rightopen  {1}{3}{1}{4}

\fullbox    {3}{1}{3}{2}

\draw[thick]
  ([xshift=-7pt,yshift=-5pt]A-3-4.center)
  rectangle
  ([xshift= 7pt,yshift= 5pt]A-3-4.center);

\draw[thick]
  ([xshift=-5pt,yshift=-7pt]A-3-4.center)
  rectangle
  ([xshift= 5pt,yshift= 7pt]A-3-4.center);
\fullbox    {4}{2}{4}{3}

\fullbox    {1}{1}{3}{1}
\topopen    {1}{3}{2}{3}

\draw[thick]
  ([xshift=-5pt,yshift=-7pt]A-1-4.center)
  rectangle
  ([xshift= 5pt,yshift= 7pt]A-1-4.center);

\bottomopen {4}{3}{4}{3}

\end{tikzpicture}\]
This gives the following $Q$, and $A$ with row sums $R$ and column sums $S$. Positive entries of $P \le Q$ and $A$ are indicated in red.
\[ Q = \arraycolsep=4pt \begin{pmatrix}
    1 & 1 & {\color{red}1} & 0 \\
    {\color{red}1} & 0 & 0 & 0 \\
    0 & 0 & 0 & {\color{red}1} \\
    0 & {\color{red}1} & 0 & 0
\end{pmatrix} , \quad
A = \begin{pmatrix}
      & - &   & {\color{red}+} \\
      &   &   & - \\ 
    {\color{red}+}&   & - & {\color{red}+} \\ 
    - &   & {\color{red}+} & - \\ 
\end{pmatrix}\]
\end{example}

\section{Transversals}\label{sec:transversals}

A \emph{transversal} of a Latin square of order~$n$ is a set of~$n$ cells with one cell in each row, one in each column, such that no two cells contain the same symbol.

It is well-known that there exist Latin squares of every even order possessing no transversals. It is conjectured that every Latin square of odd order possesses a transversal. We refer to \cite{wanless} for a survey.

Finding a Latin square~$L'$ of order $n$ which is orthogonal to some given Latin square~$L$ of order $n$ is equivalent to decomposing~$L$ into a set of~$n$ disjoint transversals $T_1, \dots, T_n$.  The cells of~$L'$ which correspond to~$T_i$ in~$L$ can all be filled with symbol~$i$, resulting in $L'$ being orthogonal to~$L$.

Below, in Figure \ref{fig:LatinSqu_transversals} is a Latin square~$L$ of order~4, with four disjoint transversals $T_1, T_2, T_3, T_4$ indicated in black, blue, red, and green, respectively.  We then produce the orthogonal Latin square~$L'$ in Figure \ref{fig:LatinSqu_transversals} (right) with symbol~$i$ corresponding to~$T_i$.

\begin{figure}[h!] \centering
  $ L = \ \begin{array}{|c|c|c|c|}\hline
      1 & {\color{b} 2} & {\color{red} 3 } & {\color{newgreen} 4 } \\\hline
      {\color{b} 3} & 4 & {\color{newgreen} 1} & {\color{red} 2} \\\hline
      {\color{red} 4} & {\color{newgreen} 3} & 2 & {\color{b} 1} \\\hline
      {\color{newgreen} 2} & {\color{red} 1} & {\color{b} 4} & 3 \\\hline
      \end{array} $ \qquad\qquad
  $ L' = \ \begin{array}{|c|c|c|c|}\hline
      1 & 2 & 3 & 4 \\\hline
      2 & 1 & 4 & 3 \\\hline
      3 & 4 & 1 & 2 \\\hline
      4 & 3 & 2 & 1 \\\hline
      \end{array} $

    \captionsetup{width=0.85\textwidth}
    \caption{Latin square $L$ with transversals $T_1$, \textcolor{b}{$T_2$}, \textcolor{red}{$T_3$}, \textcolor{ng}{$T_4$} (left) and the orthogonal $L'$ obtained from these transversals (right).}
    \label{fig:LatinSqu_transversals}
\end{figure}

Recall from Section \ref{subsec:alternsignmat} that $\langle A, B \rangle$ denotes the Frobenius inner product of the matrices $A$ and $B$. Finding a transversal of a Latin square~$L$ is equivalent to finding a permutation matrix~$P$ for which $\langle P, L_k \rangle = 1$ for all planes~$L_k$ of~$L$.  For example, each of the following permutation matrices implies a transversal in the example above (Figure \ref{fig:LatinSqu_transversals}).
\[ \begin{pmatrix}
     + & & & \\
     & + & & \\
     & & + & \\
     & & & + \\
\end{pmatrix},
\begin{pmatrix}
     & + & & \\
     + & & & \\
     & & & + \\
     & & + & \\
\end{pmatrix},
\begin{pmatrix}
     & & + & \\
     & & & + \\
     + & & & \\
     & + & & \\
\end{pmatrix},
\begin{pmatrix}
     & & & + \\
     & & + & \\
     & + & & \\
     + & & & \\
\end{pmatrix} \]

We now generalise this idea to Italian squares.
\begin{definition}
    A \emph{transversal} of an Italian square is an ASM~$A$ for which $\langle A, L_k \rangle = 1$ for each plane~$L_k$ of~$L$.
\end{definition}

\begin{example}
    Transversals of the following Italian square are indicated in blue and red, with blue representing positive cells and red negative. The first corresponds to a permutation, and the second an ASM.
    \[ \begin{array}{|c|c|c|c|c|}\hline
         {\color{b} 1} & 2 & 3 & 4 & 5 \\\hline
         2 & \SmallArray{1-2\\+3} & \SmallArray{2-3\\+4} & \SmallArray{3-4\\+5} & {\color{b}4} \\\hline
         3 & \SmallArray{2-3\\+4} & \SmallArray{1-2+3\\-4+5} & {\color{b} \SmallArray{2-3\\+4}} & 3 \\\hline
         4 & {\color{b} \SmallArray{3-4\\+5}} & \SmallArray{2-3\\+4} & \SmallArray{1-2\\+3}& 2 \\\hline
         5 & 4 & {\color{b} 3} & 2 & 1 \\\hline
    \end{array} \qquad
    \begin{array}{|c|c|c|c|c|}\hline
         1 & 2 & 3 & {\color{b} 4} & 5 \\\hline
         2 & {\color{b} \SmallArray{1-2\\+3}} & \SmallArray{2-3\\+4} & \SmallArray{3-4\\+5}& 4 \\\hline
         3 & \SmallArray{2-3\\+4} & {\color{b} \SmallArray{1-2+3\\-4+5}} & \SmallArray{2-3\\+4} & 3 \\\hline
         {\color{b} 4} & \SmallArray{3-4\\+5} & \SmallArray{2-3\\+4}& {\color{red} \SmallArray{1-2\\+3}} & {\color{b} 2} \\\hline
         5 & 4 & 3 & {\color{b} 2} & 1 \\\hline
    \end{array} \]
\end{example}

\begin{example}
    Exhaustive computer search shows that the following is the only Italian square (up to rearrangement of its planes) with either $4 \times 4$ diamond ASM as a transversal.  Both are transversals, with positive transversal cells indicated in blue, and negatives in red.
    \[ \begin{array}{|c|c|c|c|}\hline
         1 & {\color{b} 2} & 3 & 4 \\\hline
         {\color{b} 2} & {\color{red} 1 } & {\color{b} 4} & 3 \\\hline
         4 & {\color{b} 3} & {\color{red} 2} & {\color{b} 1} \\\hline
         3 & 4 & {\color{b} 1} & 2 \\\hline
    \end{array} \qquad\qquad
    \begin{array}{|c|c|c|c|}\hline
        1 & 2 & {\color{b} 3} & 4 \\\hline
        2 & {\color{b} 1} & {\color{red} 4} & {\color{b} 3} \\\hline
        {\color{b} 4} & {\color{red} 3} & {\color{b} 2} & 1 \\\hline
        3 & {\color{b} 4} & 1 & 2 \\\hline
    \end{array} \]
\end{example}

Suppose~$L$ is an Italian square of order~$n$.  Then any transversal ASM~$A$ of~$L$ must be perpendicular to each plane of~$L$; that is, $\langle L_i, A \rangle = 1$ for all planes~$L_i$ of~$L$.
If~$A$ is an ASM, then we must also have that $\langle \frac 1 n J, A \rangle = 1$, where~$J$ is the all-ones matrix, and that~$A$ has constant row and column sums. 

\begin{definition}
    Within the space of all rational $n \times n$ matrices, let $\RC$ denote the subspace for which there exists $\lambda \in \mathbb Q$ such that all row and column sums are equal to $\lambda$. 
    
    Let $\RC_\lambda$ denote the subset of $\RC$ of elements with constant row and column sums $\lambda$.  
\end{definition}

Observe that $\RC=\mathrm{span}\{\RC_0,J \}$, and $\RC_0$ is a subspace of dimension $(n-1)^2$, since
\[
\RC_0 = \mathrm{span}\{ T_{i,j;n,n}:i,j\in \{1,\ldots,n-1\}\},
\]
where $T_{i,j;n,n}$ are the $T$-blocks introduced earlier in Section \ref{sec:mate}.

\begin{lemma}\label{lem:zeroportion}
    Every ASM $A$ can be written as $A = A^0 + \frac 1 n J$, where $A^0 \in \RC_0$.
\end{lemma}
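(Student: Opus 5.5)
The plan is to define $A^0 := A - \frac{1}{n}J$ and check directly that $A^0 \in \RC_0$. Nothing beyond the definitions of an ASM and of $\RC_0$ is needed, so the argument is a short computation of row and column sums.

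First I record the fact noted right after the definition of an ASM. In each row and each column the non-zero entries alternate in sign and begin and end with $+1$. So they read $+1,-1,\dots,-1,+1$, with exactly one more $+1$ than $-1$, and every row sum and column sum of $A$ equals $1$. Hence $A \in \RC_1$.

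Next, every row and every column of $J$ contains $n$ ones. So $\frac1n J$ has all row and column sums equal to $1$, and $\frac1n J \in \RC_1$ as well.

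Row and column sums are linear functionals, so each row sum of $A^0 = A - \frac1n J$ is $1 - 1 = 0$, and likewise each column sum. Therefore $A^0 \in \RC_0$, and $A = A^0 + \frac1n J$ as claimed.

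If membership in $\RC_0$ is instead meant as lying in $\mathrm{span}\{T_{i,j;n,n}\}$, this is the same condition by the observation preceding the lemma. Explicitly, $A^0 = \sum_{i,j<n} A^0_{ij}\, T_{i,j;n,n}$. Both sides agree in the entries $(i,j)$ with $i,j<n$ by construction, and the entries in the last row and column are then forced by the zero row and column sums. I do not expect any step to be an obstacle.
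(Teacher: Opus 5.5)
Your proof is correct and is essentially the paper's argument: set $A^0 = A - \frac1n J$ and note that every row and column of both $A$ and $\frac1n J$ sums to $1$, so all row and column sums of $A^0$ vanish and $A^0 \in \RC_0$. Your additional check that $A^0 = \sum_{i,j<n} A^0_{ij}\, T_{i,j;n,n}$ is also right, though it is not needed, since $\RC_0$ is defined by the zero row and column sum condition.
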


\begin{proof}
As $A$ is an ASM with entries $A_{ij}$, we have $\sum_{i}A_{ij}= 1$ for all $j$, which implies that the column sums satisfy
\[ \sum_{i} (A - \tfrac 1 n J)_{ij}= \sum_{i} A_{ij} - \tfrac 1 n \sum_{i} J_{ij} = 0 . \]
Similarly, with $\sum_{j}A_{ij}= 1$ for all $i$, it follows that also all row sums of $A -\frac 1 n J$ are equal to $0$.
\end{proof}

\begin{definition}
    Given an ASM~$A$, we call $A^0 := A - \frac 1 n J$ the \emph{zero portion} of~$A$.
\end{definition}

Lemma \ref{lem:zeroportion} allows us a change of perspective in characterising pairs of orthogonal ASMs, and with that of Italian squares, in terms of their zero portions. 

\begin{lemma} \label{lem:char_orth_ASMs_zeroportion}
    Let $A$ and $B$ be two ASMs of the same order, with zero portions $A^0$ and $B^0$, respectively. Then \begin{align} \label{eq:equivalence_innerprod_zeroportion}
    \langle A, B \rangle = 1 ~\text{ is equivalent to }~ \langle A^0, B^0 \rangle = 0 .
\end{align}  
\end{lemma}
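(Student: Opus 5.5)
The plan is to expand the inner product bilinearly using the decomposition of Lemma~\ref{lem:zeroportion}. We write $A = A^0 + \frac 1 n J$ and $B = B^0 + \frac 1 n J$, so that
\[ \langle A, B \rangle = \langle A^0, B^0 \rangle + \tfrac 1 n \langle A^0, J \rangle + \tfrac 1 n \langle J, B^0 \rangle + \tfrac 1 {n^2} \langle J, J \rangle . \]
It then remains to evaluate the three terms that do not involve both zero portions.

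First, for any matrix $M$, $\langle M, J \rangle$ equals the sum of all entries of $M$. We can obtain this sum by adding up the row sums. Since $A^0, B^0 \in \RC_0$, every row sum is $0$, so $\langle A^0, J \rangle = \langle J, B^0 \rangle = 0$.

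Second, $\langle J, J \rangle = n^2$, so the last term equals $1$.

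Hence $\langle A, B \rangle = \langle A^0, B^0 \rangle + 1$, and $\langle A, B\rangle = 1$ holds exactly when $\langle A^0, B^0\rangle = 0$.

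There is no genuine obstacle here. The only point needing care is to note that the cross terms vanish because the zero portions lie in $\RC_0$, and this is exactly what Lemma~\ref{lem:zeroportion} provides.
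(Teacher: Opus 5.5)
Your proposal is correct and follows the paper's proof essentially line for line. You expand $\langle A^0 + \frac1n J,\, B^0 + \frac1n J\rangle$ bilinearly, note that the cross terms vanish because $A^0, B^0 \in \RC_0$ have entry sum zero, and use $\frac{1}{n^2}\langle J, J\rangle = 1$ to conclude $\langle A, B\rangle = \langle A^0, B^0\rangle + 1$.
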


\begin{proof}
This is a direct consequence of the equation $\langle A^0,B^0 \rangle = \langle A, B \rangle -1$, which itself follows from the definitions of an ASM and its zero portion.
Indeed, we have $\langle A, B \rangle = \langle A^0 + \frac 1 n J, B^0 + \frac 1 n J \rangle = \langle A^0, B^0 \rangle + \frac 1 n (\langle A^0, J \rangle + \langle J, B^0 \rangle) + \frac 1 {n^2} \langle J, J \rangle = \langle A^0, B^0 \rangle + 0 + 1$.
\end{proof}

\begin{corollary}
    An ASM $A$ is a transversal of the Italian square $L$ if and only if $\langle A^0, L_k^0 \rangle = 0$ for each plane~$L_k$ of~$L$.
\end{corollary}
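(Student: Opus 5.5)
This corollary follows by combining the definition of a transversal with Lemma~\ref{lem:char_orth_ASMs_zeroportion}. The plan is to check that every object involved is an ASM of the same order, so that the lemma applies plane by plane.

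First, fix an Italian square $L = (L_1, \dots, L_n)$ of order $n$. By definition, $L$ corresponds to an ASHM, and each plane $L_k$ is an $n \times n$ matrix. I will verify that each $L_k$ is an ASM. Fixing $k$ and varying $j$ (respectively $i$) traces out a row (respectively column) of the hypermatrix, and the ASHM condition says the non-zero entries along any such line alternate in sign, beginning and ending with $+1$. So every row and column of $L_k$ satisfies the ASM condition, and $L_k$ is an $n \times n$ ASM. By Lemma~\ref{lem:zeroportion}, its zero portion $L_k^0 = L_k - \frac1n J$ is therefore well defined and lies in $\RC_0$.

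Next, let $A$ be an $n \times n$ ASM, so $A^0$ is also defined. By definition, $A$ is a transversal of $L$ exactly when $\langle A, L_k\rangle = 1$ for every $k \in [n]$. For each fixed $k$, both $A$ and $L_k$ are ASMs of order $n$, so Lemma~\ref{lem:char_orth_ASMs_zeroportion} gives $\langle A, L_k\rangle = 1$ if and only if $\langle A^0, L_k^0\rangle = 0$. Taking the conjunction over all $k \in [n]$ yields the claimed equivalence.

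No real obstacle is expected. The only point needing care is the observation that the planes of an ASHM are themselves ASMs; this is immediate from the row and column conditions in the definition, since the vertical-line condition is not needed.
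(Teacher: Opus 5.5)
Your proposal is correct and follows essentially the same argument the paper intends. The paper states this corollary without proof directly after Lemma~\ref{lem:char_orth_ASMs_zeroportion}, and it follows exactly as you describe: apply that lemma to $A$ and each plane $L_k$, which is an ASM by the definition of an Italian square, and combine with the definition of a transversal.
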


We now consider the Italian square corresponding to the ASHM with most nonzero entries, the \emph{diamond} Italian square \textbf{$\mathcal D_n$}. We first show existence of a transversal of $\mathcal D_n$ for $n \equiv \pm 1 \bmod 6$, and subsequently apply the above concepts to show nonexistence for $n \not\equiv \pm 1 \bmod 6$.

Brualdi and Dahl define the diamond ASHM/Italian square $\mathcal D_n$ of order~$n$~\cite[Ex.~3]{BrualdiDahl18}.  The following describes the $(i, j, k)$-entry, indexed from~$1$ to~$n$:
\[ (\mathcal D_n)_{ijk} = \begin{cases}
    (-1)^{i+j+k+1} & \text{if } |i \!+\! j \!-\! n \!-\! 1| \le n \!-\! k \text{ and } |i \!-\! j| \le k \!-\! 1 \\
    0 & \text{otherwise}
\end{cases} \]

We denote the planes of $\mathcal D_n$ by $E_k$; that is, $\mathcal D_n = (E_1, \ldots, E_n)$.  See Figure \ref{fig:Diamond_4} for an example when $n = 4$.

\begin{figure}[h]
\[ \begin{pmatrix}
     + & & & \\
     & + & & \\
     & & + & \\
     & & & + \\
\end{pmatrix},
\begin{pmatrix}
     & + & & \\
     + & - & + & \\
     & + & - & + \\
     & & + & \\
\end{pmatrix},
\begin{pmatrix}
     & & + & \\
     & + & - & + \\
     + & - & + & \\
     & + & & \\
\end{pmatrix},
\begin{pmatrix}
     & & & + \\
     & & + & \\
     & + & & \\
     + & & & \\
\end{pmatrix} \]
\captionsetup{width=0.98\textwidth}
\caption{The planes $E_1,E_2,E_3, E_4$ of the diamond Italian square $\mathcal D_4$.}
\label{fig:Diamond_4}
\end{figure}

\begin{lemma}\label{lem: diamond transversal} For every $n \ge 1$ satisfying $n \equiv \pm 1 \bmod 6$, the diamond~$\mathcal D_n$ has a transversal permutation matrix~$P$ given by the non-zero entries
  \[ \begin{cases} (2 \ell \!-\! 1, 4 \ell \!-\! 2) \text{ and } (2 \ell, 4 \ell \!-\! 1) & \text{for } \ell = 1, \dots, \lceil \frac {n - 1} 4 \rceil \,, \\
       (n \!-\! 2 \ell \!+\! 1, 4 \ell) \text{ and } (n \!-\! 2 \ell, 4 \ell + 1) & \text{for } \ell = 1, \dots, \lfloor \frac {n - 1} 4 \rfloor \,, \end{cases} \]
  as well as $(n, 1)$. \end{lemma}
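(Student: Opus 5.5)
The plan is a direct verification in two stages. First we check that $P$ is a permutation matrix. Then we compute $\langle P, E_k\rangle$ for every $k$ by recording, for each non-zero cell of $P$, the set of planes in which that cell is non-zero and with which signs.

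\textbf{Permutation check.} The first family uses rows $1,2,\dots$ from the top and columns $\equiv 2,3 \bmod 4$. The second family uses rows $n-1,n-2,\dots$ from the bottom and columns $\equiv 0,1 \bmod 4$. The extra cell $(n,1)$ uses row $n$ and column $1$. Write $n-1=4q+r$ with $r\in\{0,2\}$; this is the only information about $n$ needed here, since $n$ is odd. From the ranges $\ell\le\lceil\frac{n-1}{4}\rceil$ and $\ell\le\lfloor\frac{n-1}4\rfloor$ we count rows and columns, and we check that the top rows $1,\dots,2\lceil\frac{n-1}4\rceil$ and the bottom rows $n-2\lfloor\frac{n-1}4\rfloor,\dots,n-1$ partition $[n-1]$. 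We also check that the columns $2,\dots,n$ are each hit exactly once. This is routine.

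\textbf{Contribution of a single cell.} Fix a cell $(i,j)$. By the formula for $\mathcal D_n$, $(E_k)_{ij}\neq 0$ exactly for $k$ in the interval $[a,b]$, where $a=|i-j|+1$ and $b=n-|i+j-n-1|$. Its sign is $(-1)^{i+j+k+1}$. Since $i+j\equiv |i-j| \pmod 2$, the sign at $k=a$ is $+$, so the cell contributes the alternating pattern $+,-,+,\dots$ along $[a,b]$. One checks that $b-a$ is even, so the pattern also ends with $+$. Hence $\langle P,E_k\rangle=\sum_{c}[k\in[a_c,b_c]]\,(-1)^{k-a_c}$.

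The intervals are as follows.
\begin{itemize}
\item The cells $(2\ell-1,4\ell-2)$ and $(2\ell,4\ell-1)$ both have $a=2\ell$. Their upper ends are $b=n-|6\ell-n-4|$ and $b=n-|6\ell-n-2|$ respectively.
\item The cells $(n-2\ell+1,4\ell)$ and $(n-2\ell,4\ell+1)$ both have $b=n-2\ell$. Their lower ends are $a=|n-6\ell+1|+1$ and $a=|n-6\ell-1|+1$ respectively.
\item The cell $(n,1)$ has $a=b=n$.
\end{itemize}
Because both cells of a pair share an endpoint, their summed contribution is $\pm$ an alternating pattern only on the part where the two intervals differ, and it cancels or doubles elsewhere. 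The aim is to rewrite the total as a telescoping sum over $\ell$ that leaves exactly $+1$ at every $k$.

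\textbf{Main obstacle.} The absolute values switch at $6\ell\approx n$, and this is exactly where the hypothesis $n\equiv\pm1\bmod 6$ enters. It guarantees that no $6\ell-n-2$, $6\ell-n-4$ or $n-6\ell\pm1$ vanishes in a way that would create a parity clash. It should also make the intervals from the small-$\ell$ regime (upper ends increasing in $\ell$) and the large-$\ell$ regime (upper ends decreasing) interlock so that each $k$ gets net coverage $1$. I would split into the four cases $n\equiv1,5,7,11 \pmod{12}$. In each case I would list the interval endpoints explicitly as functions of $\ell$, pair consecutive intervals whose endpoints differ by $2$ (these contribute $+1$ at one end and cancel otherwise), and verify the boundary terms at $k=1$, at $k=n$ (where the cell $(n,1)$ contributes), and at the switch index. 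I expect the bookkeeping at the switch index to be the step most likely to require care, and I would first test the scheme on $n=5,7,11,13$ to fix the pattern.
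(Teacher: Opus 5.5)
\textbf{Gap: the verification that $\langle P,E_k\rangle=1$ for every $k$ is never done, and the mechanism you sketch for it cannot work.}

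Your setup is sound. A cell $(i,j)$ is non-zero exactly in the planes $k\in[a,b]$ with $a=|i-j|+1$ and $b=n-|i+j-n-1|$, with signs $+,-,\dots,+$. Your endpoint formulas for the three families are also correct. But the argument stops where the content of the lemma begins: you never show $\sum_c[k\in[a_c,b_c]](-1)^{k-a_c}=1$.

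Your pairing idea does not deliver it. On the overlap of two such patterns the contribution is $(-1)^{k-a}+(-1)^{k-a'}$. This vanishes only if $a-a'$ (equivalently $b-b'$) is odd. Patterns whose endpoints differ by $2$ are in phase, so they double rather than cancel. For example, the upper pair with intervals $[2\ell,6\ell-4]$ and $[2\ell,6\ell-2]$ (for $6\ell<n+3$) contributes $2(-1)^k$ on $[2\ell,6\ell-4]$, followed by $-1,+1$. More simply, each pattern sums to $1$ over $k$, so no two of them can together leave a single $+1$.

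Your account of the hypothesis is also off. The quantities $6\ell-n-2$ and $6\ell-n-4$ are odd and never vanish. For $n\equiv\pm1\bmod 6$, one of $n-6\ell\pm1$ \emph{does} vanish, at $\ell=(n\mp1)/6$. This is essential: it places exactly one cell of $P$ on the main diagonal, which is what gives $\langle P,E_1\rangle=1$. In the upper family, what the hypothesis excludes is $6\ell=n+3$ (i.e.\ $n\equiv 3\bmod 6$), where a pair would straddle the switch of the absolute values.

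\textbf{The missing idea: telescope in $k$, not in $\ell$.} With $g_c(k)=[a_c\le k\le b_c](-1)^{k-a_c}$ one has $g_c(k)+g_c(k+1)=[k=a_c-1]+[k=b_c]$. Hence $\langle P,E_k\rangle+\langle P,E_{k+1}\rangle$ just counts the cells with $a_c-1=k$ or $b_c=k$. This is the paper's observation that $F_k=E_k+E_{k+1}$ is the $(0,1)$-matrix supported on the lines $j-i=\pm k$ and $i+j=n+1\pm(n-k)$.

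Since $\langle P,E_1\rangle$ is the number of diagonal cells of $P$, induction on $k$ reduces the lemma to two facts:
\begin{itemize}
\item exactly one $a_c$ equals $1$;
\item every $k\in[1,n-1]$ occurs exactly twice in the multiset $\{a_c-1\}\cup\{b_c\}$.
\end{itemize}

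The odd $k\in[1,n-2]$ are hit twice each: by $a-1=2\ell-1$ from the upper pairs and by $b=n-2\ell$ from the lower pairs. All other endpoint values in $[0,n-1]$ are even. They come from three sources:
\begin{itemize}
\item the upper $b$'s: $6\ell-4$ and $6\ell-2$ before the switch, $2n+4-6\ell$ and $2n+2-6\ell$ after;
\item the lower $a-1$'s: $n-6\ell\pm1$ before the switch, $6\ell-n\mp1$ after;
\item the corner's $n-1$.
\end{itemize}
Sorting these by residue modulo $6$ shows that, when $n\equiv\pm1\bmod 6$, each even $k\ge 2$ occurs twice and $0$ occurs once. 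With this reduction your framework becomes a complete proof, coinciding with the paper's. As written, the decisive verification is absent.
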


\begin{proof} We show that $\langle E_k, P \rangle = 1$ for each of the planes $E_k$ of the diamond~$\mathcal D_n$.

  For $k = 1$ the plane $E_1$ is the identity matrix.  It shares non-zero entries with~$P$ precisely if $n - 2 \ell + 1 = 4 \ell$ or $n - 2 \ell = 4 \ell + 1$ for some $\ell = 1, \dots, \lceil \frac {n - 1} 4 \rceil$, that is when $n = 6 \ell - 1$ or $n = 6 \ell + 1$.  Hence if $n \equiv \pm 1 \bmod 6$ we have $\langle E_1, P \rangle = 1$.

  Now consider for $k = 1, \dots, n \!-\! 1$ the matrix~$F_k$ defined by
  \[ F_k := E_k + E_{k+1} . \]
  We will show that $\langle F_k, P \rangle = 2$ for any~$k$. 
  From this it follows that $\langle E_{k+1}, P \rangle = \langle F_k, P \rangle - \langle E_k, P \rangle = 2 - \langle E_k, P \rangle$, and thus $\langle E_k, P \rangle = 1$ for all~$k$ by induction, proving our claim.

  The proof is based on the observation that~$F_k$ is a $(0, 1)$-matrix with non-zero entries $(i, j)$ precisely on the four lines where $j - i = \pm k$ and $i + j = n + 1 \pm k'$ with $k' := n - k$.  The first two lines are of diagonal type, the last two are of anti-diagonal type.  
  Also, they always contain two lines of odd type and two of even type (i.e.\ lines with $i \pm j = \text{odd or even}$).  Below is an example for $F_1$, $F_2$, $F_3$, $F_4$ if $n = 5$.
  \[ {\small\begin{pmatrix} 1 & 1 & & & \\ 1 & & 1 & & \\ & 1 & & 1 & \\ & & 1 & & 1 \\ & & & 1 & 1 \end{pmatrix}} ~
    {\small\begin{pmatrix} & 1 & 1 & & \\ 1 & & & 1 & \\ 1 & & & & 1 \\ & 1 & & & 1 \\ & & 1 & 1 & \end{pmatrix}} ~
    {\small\begin{pmatrix} & & 1 & 1 & \\ & 1 & & & 1 \\ 1 & & & & 1 \\ 1 & & & 1 & \\ & 1 & 1 & & \end{pmatrix}} ~
    {\small\begin{pmatrix} & & & 1 & 1 \\ & & 1 & & 1 \\ & 1 & & 1 & \\ 1 & & 1 & & \\ 1 & 1 & & & \end{pmatrix}} \]
  
  From this description, the matrix~$P$ in the statement should be thought of as a matrix obtained by selecting two entries from precisely one of the lines of each~$F_k$.  In particular, the entries of~$P$ can be divided as follows.
  \begin{itemize}
  \item[$\circ$] Upper pairs of the form $(2 \ell - 1 ,\, 4 \ell - 2)$ and $(2 \ell ,\, 4 \ell - 1)$ share the diagonal $j - i = 2\ell - 1$.  These are \emph{odd} diagonals: $j - i \in \{ 1, 3, 5, \dots \}$.
  \item[$\circ$] Lower pairs of the form $(n - 2 \ell + 1 ,\, 4 \ell)$ and $(n - 2 \ell ,\, 4 \ell + 1)$ share the anti-diagonal $i + j = n + 2 \ell + 1$.  These are \emph{even} anti-diagonals strictly below the centre: $i + j \in \{ n + 3, n + 5, \dots \}$.
  \item[$\circ$] Corner: The single entry $(n, 1)$ on the central anti-diagonal $i + j = n + 1$.
  \end{itemize}
  Here is an example for $n=7$, with upper pairs in blue, lower pairs in red, and the corner entry in green.
  \[ {\small P \,=\,
\begin{pmatrix}
  & {\color{b} 1} &  &  &  &  &  \\
  &  & {\color{b} 1} &  &  &  &  \\
  &  &  &  &  & {\color{b} 1} &  \\
  &  &  &  &  &  & {\color{b} 1} \\
  &  &  &  & {\color{red} 1} &  &  \\
  &  &  & {\color{red} 1} &  &  &  \\
  {\color{green!60!black} 1} &  &  &  &  &  &
\end{pmatrix} \qquad
\begin{array}{@{}l@{}}
\left.\vphantom{\begin{matrix}\\[0.8em]\end{matrix}}\right\}
  {\color{b} j-i = 1}\\[0.5em]
\left.\vphantom{\begin{matrix}\\[0.8em]\end{matrix}}\right\}
  {\color{b} j-i = 3}\\[0.3em]
\left.\vphantom{\begin{matrix}\\[0.8em]\end{matrix}}\right\}
  {\color{red} i+j = 10}\\[0.0em]
  \quad {\color{green!60!black} i+j = 8}
\end{array}} \]

The subsequent examination shows the following.  For \emph{odd} values of~$k$ one of the lines of~$F_k$ is always one of the lines of~$P$, and therefore shares~$2$ non-zero entries.
For \emph{even} values of~$k$, exactly two of the four lines of~$F_k$ contain entries of~$P$.
Each such active line meets a pair of~$P$ of the opposite type: an anti-diagonal of~$F_k$ crosses a diagonal of an upper pair, and a diagonal of~$F_k$ crosses an anti-diagonal of a lower pair or the corner.

  We proceed with the examination of shared ones of~$F_k$ with~$P$ in more detail.  For the upper pairs $(2 \ell - 1, 4 \ell - 2)$ and $(2 \ell, 4 \ell - 1)$ we have $j - i = 2 \ell - 1 = k$, and for the lower pairs $(n - 2 \ell + 1, 4 \ell)$ and $(n - 2 \ell, 4 \ell + 1)$ we find $i + j = n + 2 \ell + 1 = n + 1 + k'$, that is $k = n - 2 \ell$.  This shows that for all odd~$k$ we have exactly~$2$ shared ones.

  For the upper pairs $(2 \ell - 1, 4 \ell - 2)$ and $(2 \ell, 4 \ell - 1)$ we further have
  \[ i + j = 6 \ell - 3 \quad\text{ and }\quad i + j = 6 \ell - 1 \,, \]
  which is $n + 1 - k'$ (if $\ell \le \lfloor \frac {n+1} 6 \rfloor$) or $n + 1 + k'$ (if $\ell > \lfloor \frac {n+1} 6 \rfloor$) for some  $k' = n - k$.  And for the lower pairs $(n - 2 \ell + 1, 4 \ell)$ and $(n - 2 \ell, 4 \ell + 1)$, as well as the corner $(n, 1)$, we find
  \[ j - i = 6 \ell - n - 1 \quad\text{ and }\quad j - i = 6 \ell - n + 1 \,, \]
  which equals some~$-k$ (if $\ell \le \lfloor \frac {n-1} 6 \rfloor$) or~$k$ (if $\ell > \lfloor \frac {n-1} 6 \rfloor$).

  These affect only the even~$k$.  For the upper pairs, we find $k = 6 \ell - 4$ and $k = 6 \ell - 2$ if $\ell \le \lfloor \frac {n+1} 6 \rfloor$, or $k = 2 n - 6 \ell - 4$ and $k = 2 n - 6 \ell - 2$ otherwise.
  For the lower pairs and the corner we have $k = n - 6 \ell + 1$ and $k = n - 6 \ell - 1$ if $\ell \le \lfloor \frac {n-1} 6 \rfloor$, or $k = 6 \ell - n - 1$ and $k = 6 \ell - n + 1$.  
  Note that $2 n - 6 \ell - 4 \equiv 6 \ell - n - 1$ and $2 n - 6 \ell - 2 \equiv 6 \ell - n + 1$ modulo~$6$, as~$n$ is odd.
  
  We thus have a shared one for $k \bmod 6 \in \{ 2, 4 \}$ in the upper pairs if $\ell \le \lfloor \frac {n+1} 6 \rfloor$, for $k \bmod 6 \in \{ n \!+\! 1, n \!-\! 1 \}$ in the lower pairs or corner if $\ell \le \lfloor \frac {n-1} 6 \rfloor$, and for $k \bmod 6 \in \{ 2 n \!+\! 2, 2 n \!+\! 4 \}$ in the remaining cases.
  Since $n \equiv \pm 1 \bmod 6$ we have exactly~$2$ shared ones in total for all even~$k$.

  Altogether this shows $\langle F_k, P \rangle = 2$ for all~$k$ and concludes the proof. 
\end{proof}

\begin{theorem}\label{thm: diamond no transversal}
    The diamond Italian square\ $\mathcal D_n$ possesses a transversal if and only if $n \not\equiv \pm 1 \bmod 6$.
\end{theorem}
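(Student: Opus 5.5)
The plan is to use the telescoping device from the proof of Lemma~\ref{lem: diamond transversal} for both directions of the statement as worded. Write $F_k = E_k + E_{k+1}$ for $k = 1, \dots, n-1$. An ASM $A$ is a transversal of $\mathcal D_n$ exactly when $\langle E_1, A \rangle = 1$ and $\langle F_k, A \rangle = 2$ for every $k$, since then $\langle E_{k+1}, A\rangle = 2 - \langle E_k, A\rangle$ forces every value to be $1$. Each $F_k$ is the $(0,1)$-indicator of the four lines $j - i = \pm k$ and $i + j = n+1 \pm (n-k)$. So the transversal condition becomes a family of \emph{line-count} conditions on $A$, counting $+1$ entries minus $-1$ entries of $A$ on these diagonals and anti-diagonals.

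\textbf{Direction claimed to exist ($n \not\equiv \pm 1 \bmod 6$).} I would split into $n$ even and $n \equiv 3 \bmod 6$, and try to construct $A$ explicitly in each case, in the style of Lemma~\ref{lem: diamond transversal}. The building blocks are pairs of entries on a common diagonal, which cover odd $k$ (two hits each), and pairs on a common anti-diagonal, which cover even $k$. The upper and lower families would be re-indexed so that their residues mod~$6$, computed as in the lemma's proof (the values $6\ell-4$, $6\ell-2$, $n-6\ell\pm1$), interlock without repetition. The corner entry would be moved, or replaced by a small T-block correction $A = P - T_{i_1,j_1;i_2,j_2}$, to fix $\langle E_1, A\rangle = 1$. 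For even $n$, $D_n$ itself or a permutation plus one $-1$ entry may be needed, because the parity of the lines changes. I would use Corollary~\ref{f-k for D_n} to certify that the required sign pattern can be completed to an ASM.

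\textbf{Nonexistence direction ($n \equiv \pm 1 \bmod 6$).} Here I would work with zero portions. By the corollary following Lemma~\ref{lem:char_orth_ASMs_zeroportion}, a transversal $A$ must satisfy $\langle A^0, E_k^0\rangle = 0$ for all $k$. I would sum these conditions with weights $\omega^k$, where $\omega$ is a primitive $6$th root of unity. The aim is a linear identity $\sum_k c_k \langle E_k, A\rangle = \text{const}$, valid for every ASM, whose constant is incompatible with all $\langle E_k, A\rangle = 1$ in this residue class. The natural candidate comes from counting entries of $A$ on lines $i \pm j \equiv r \bmod 6$ using row and column sums equal to $1$.

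\textbf{Main obstacle.} The main obstacle is that the nonexistence direction as worded must be reconciled with Lemma~\ref{lem: diamond transversal}, which exhibits a transversal permutation for $n \equiv \pm 1 \bmod 6$. Before anything else, I would check small cases directly: $n = 2, 3, 4$ against $n = 5, 7$. This will determine which indexing convention for $\mathcal D_n$ and its planes makes the stated residue condition correct, and I would adapt the mod-$6$ invariant accordingly.
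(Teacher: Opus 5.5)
The statement as printed contains a typo. Both the version in the introduction and the paper's own proof (Lemma~\ref{lem: diamond transversal} for sufficiency, then ``we now prove that this is a necessary condition'') establish that $\mathcal D_n$ has a transversal if and only if $n \equiv \pm 1 \bmod 6$. Your plan commits to the literal wording, so both halves aim at false claims.

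The existence half cannot succeed:
\begin{itemize}
\item for $n=2$ the planes are $E_1=I_2$ and $E_2=I_2'$; any transversal would itself be $I_2$ or $I_2'$, i.e.\ one of the planes, and has inner product $2$ with itself;
\item for $n=3$ the middle plane is $E_2=D_3$, which has no orthogonal mate by Theorem~\ref{thm: ortho mate}.
\end{itemize}
The nonexistence half is refuted by $n=1$ and by the explicit permutation of Lemma~\ref{lem: diamond transversal} (e.g.\ $n=5$).

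You noticed the clash, but your remedy of finding an indexing convention that makes the printed condition true cannot work. Being a transversal is a condition on all planes at once, so it does not depend on how the planes are labelled or reflected. As written, the proposal contains no viable argument.

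For the correct statement, sufficiency is Lemma~\ref{lem: diamond transversal}. What is missing is an integrality obstruction for $n\not\equiv\pm1 \bmod 6$. The paper computes a basis of $\{A\in\RC : \langle E_k^0,A\rangle=0 \text{ for all } k\}$. It consists of integer matrices $X_{ij}\in\RC_0$ together with one $Y\in\RC_1$ whose row $n-1$ has entries $\frac{n^2-1}{6}-\binom{t}{2}$. An ASM transversal would equal $Y$ plus an integer combination of the $X_{ij}$, forcing $Y$ to be integral.

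Your idea of a linear identity from line counts also works in the correct direction, but with quadratic rather than mod-$6$ periodic weights. Let $d_t$ and $s_u$ be the sums of $A$ along $j-i=t$ and $i+j=u$. Your $F_k$ reformulation gives the transversal conditions $d_0=1$ and $d_k+d_{-k}+s_{k+1}+s_{2n+1-k}=2$ for $1\le k\le n-1$. Pair $A$ with $(j-i)^2+(i+j-n-1)^2=2i^2+2j^2-2(n+1)(i+j)+(n+1)^2$ and use that all row and column sums equal $1$. This gives
\[ \sum_{k=1}^{n-1}\Bigl(k^2(d_k+d_{-k})+(n-k)^2(s_{k+1}+s_{2n+1-k})\Bigr)=\frac{n(n^2-1)}{3}. \]
Eliminate the $s$-terms using the transversal conditions and $\sum_{k\ge 1}(d_k+d_{-k})=n-1$. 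This yields $\sum_{i,j}|i-j|A_{ij}=\frac{n^2-1}{3}$. Since $A$ is integral, the left side is congruent mod $2$ to $\sum_{i,j}(i+j)A_{ij}=n(n+1)$, which is even. Hence $6\mid n^2-1$, i.e.\ $n\equiv\pm1 \bmod 6$.
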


\begin{proof}
Lemma \ref{lem: diamond transversal} gives a construction for a transversal of $\D_n$ for all $n \equiv \pm 1 \bmod 6$. We now prove that this is a necessary condition on $n$.

Suppose $A$ is a transversal of $\mathcal D_n$. Then $A\in \RC$, and $\langle E_k^0,A\rangle=0$ for all $k\in[n]$. The set of matrices satisfying these two properties is a subspace of dimension $(n-1)(n-2)+1$. We proceed by finding a basis of this space, and showing that it cannot contain an ASM when $n\not\equiv \pm 1 \bmod 6$.

First consider matrices of the form $X_{ij,a} := T_{i,j;n,n} + (e_{n-1} \!-\! e_n)^\top a$ for $i \in \{ 1, \dots, n \!-\! 2 \}$, $j \in \{ 1, \dots, n \!-\! 1 \}$, where~$a$ is a vector of length~$n$ with $\sum a_i = 0$. That is, we define
\[
\bordermatrix{
        &        &        & j      &        & n      \cr
        & 0      & \cdots & 0      & \cdots & 0      \cr
        & \vdots & \ddots & \vdots & \ddots & \vdots \cr
i       & 0      & \cdots & 1      & \cdots & -1     \cr
        & \vdots & \ddots & \vdots & \ddots & \vdots \cr
n\!-\!1 & 0      & \cdots & 0      & \cdots & 0      \cr
n       & 0      & \cdots & -1     & \cdots & 1      \cr
}
\,+
\bordermatrix{
        &        &        &        &        &        \cr
        & 0      & \cdots & 0      & \cdots & 0      \cr
        & \vdots & \ddots & \vdots & \ddots & \vdots \cr
        & 0      & \cdots & 0      & \cdots & 0      \cr
        & \vdots & \ddots & \vdots & \ddots & \vdots \cr
        & a_1    & \cdots & a_j    & \cdots & a_n    \cr
        & -a_1   & \cdots & -a_j   & \cdots & -a_n   \cr
} =: X_{ij,a} .
\]
Any such matrix is in $\RC_0$, and any two with different indices $ij$ are linearly independent. We claim that there exists a unique solution $a$ for each $i,j$, and that the entries of $a$ are in fact integers.

By imposing $ \langle E_k^0, X_{ij, a} \rangle = 0$, we obtain the equations
\[ \begin{cases}
  a_{n-1} - a_n = b_1, &  \\
  a_{k-1} - 2a_k + a_{k+1} = b_k, & \text{for } 2 \le k \le n-1 \\
  -a_1 + a_2 = 
  b_n & 
\end{cases} \]
where 
\[ b_k := -(\mathcal D_n)_{ijk}+\delta_{i,n-k+1}+\delta_{j,n-k+1}-\delta_{k1} , \]
and $\delta_{xy}$ is the usual Kronecker delta symbol.

The final condition follows from the first $n-1$ conditions, and so the system is consistent. After taking into account that $\sum a_i=0$, we get that the system has a unique solution. Solving this recurrence, we get that

\[
a_t = \left( \sum_{s=n-t+1}^{n-1}\sum_{\ell=1}^s b_\ell \right) - \frac 1 n \sum_{k=1}^n \left( {n \choose 2} - {k \choose 2} \right) b_k .
\]

It is straightforward to see that $\sum_k b_k = 0$. Note that

\[
-\sum_{k=1}^n {k \choose 2} b_k = {n \!-\! i \!+\! 1 \choose 2} +{n \!-\! j \!+\! 1 \choose 2} - \sum_{k=1}^n {k \choose 2} (\mathcal D_n)_{ijk} ,
\]
and 
\[
\sum_{k=1}^n {k \choose 2} (\mathcal D_n)_{ijk} = \sum_{k=|i-j+1|}^{n-|i+j+n-1|} (-1)^{i+j+k+1} {k\choose 2} .
\]
We can use the elementary fact
\[ \sum_{k=a}^{a+2b} (-1)^{k-a} {k \choose 2} \,=\, {a \choose 2} + b (a + b) \,. \]

By symmetry we may assume $i \le j$.  In the case $i + j \le n + 1$ we let $a = j - i + 1$ and $b = i - 1$ to find
\begin{align*}
\sum_{k=j-i+1}^{j+i-1} (-1)^{i+j+k+1} {k \choose 2} &= {j \!-\! i \!+\! 1 \choose 2} + (i - 1) j \\
&= {i \choose 2} + {j \choose 2} 
\end{align*}
Similarly, if $i + j > n + 1$ we take $b = n - j$ and obtain
\begin{align*}
\sum_{k=j-i+1}^{2n-j-i+1} (-1)^{i+j+k+1} {k \choose 2} &= {j \!-\! i \!+\! 1 \choose 2} + (n - j) (n - i + 1) \\
&= {n \!+\! 1 \!-\! i \choose 2} + {n \!+\! 1 \!-\! j \choose 2} .
\end{align*}

Hence we obtain 
\[
a_t = \begin{cases}\left(\sum_{s=n-t+1}^{n-1}\sum_{\ell=1}^s b_\ell\right)+(n+1-i-j)&\text{if }i+j\leq n+1\\
\sum_{s=n-t+1}^{n-1}\sum_{\ell=1}^s b_\ell&\text{otherwise.}
\end{cases}
\]
Clearly these are integers for all $i,j,t$. We denote these unique solutions by $X_{ij}$. Next we consider a matrix of the following form, where again $a$ is a vector with $\sum a_i=0$.
\[
\bordermatrix{
        &        &        &       &        &       \cr
        & 0      & \cdots & 0      & \cdots & 1      \cr
        & \vdots & \ddots & \vdots & \ddots & \vdots \cr
       & 0      & \cdots & 0      & \cdots & 1     \cr
        & \vdots & \ddots & \vdots & \ddots & \vdots \cr
n-1     & 0      & \cdots & 0      & \cdots & 1      \cr
n       & 1      & \cdots & 1     & \cdots & 2-n      \cr
}
+
\bordermatrix{
        &        &        &       &        &       \cr
        & 0      & \cdots & 0      & \cdots & 0      \cr
        & \vdots & \ddots & \vdots & \ddots & \vdots \cr
       & 0      & \cdots & 0      & \cdots & 0      \cr
        & \vdots & \ddots & \vdots & \ddots & \vdots \cr
     & a_1    & \cdots & a_j    & \cdots & a_n    \cr
       & -a_1   & \cdots & -a_j   & \cdots & -a_n   \cr
}=:Y_a
\]
This matrix is in $\RC_1$, and hence is linearly independent from the $X_{ij}$'s. By imposing $\langle E_k^0,Y_a\rangle=0$, we get the system
\[
\begin{cases}
  a_{n-1} - a_n =n-1, &  \\
  a_{k-1} - 2a_k + a_{k+1} = -1, & \text{for } 2 \le k \le n-1, \\
  -a_1 + a_2 = 
  -1. & 
\end{cases}
\]

We can solve in a similar manner, obtaining the unique solution
\[
a_{t}= \frac{n^2-1}{6}-\frac{t(t-1)}{2}.
\]
We denote this unique solution by $Y$. Note that the entries of $Y$ are all integers if and only if $\frac{n^2-1}{6}$ is an integer, which occurs if and only if $n\equiv \pm1\bmod 6$.

Hence $\{Y,X_{ij}:i\in [n-2],j\in [n-1]\}$ forms a basis of the space of solutions. Moreover, any transversal ASM $A$ of $\mathcal D_n$ must be the sum of $Y$ and a $\{0,\pm 1\}$-sum of the $X_{ij}$, since otherwise the top right entry of $A$ would be negative, or the top row of $A$ would have two non-zero entries. Since each $X_{ij}$ is an integer matrix, and $Y$ is not an integer matrix if $n\not\equiv 1 \bmod 6$, it is impossible that $A$ is an integer matrix, and so cannot be an ASM.
\end{proof}

Note that this shows that a generalisation of the conjecture for transversals of Latin squares of odd order cannot be true for Italian squares; for example, $\mathcal D_9$ does not possess a transversal.

As a consequence of \Cref{thm: diamond no transversal}, we get the following result.

\begin{corollary}
The diamond Italian square $\mathcal D_n$ does not possess an orthogonal mate for $n \not\equiv\ \pm1 \bmod 6$.
\end{corollary}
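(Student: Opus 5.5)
The corollary should follow directly from \Cref{thm: diamond no transversal}, via the classical correspondence between orthogonal mates and decompositions into transversals.

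Suppose, for contradiction, that $\mathcal D_n$ has an orthogonal mate $L = (L_1, \dots, L_n)$ with $n \not\equiv \pm 1 \bmod 6$. By the definition of orthogonality of Italian squares, $\langle E_k, L_j \rangle = 1$ for all $j,k \in [n]$. Now fix any single plane, say $L_1$. Since $L$ is an ASHM, each plane $L_j$ is itself an ASM: the rows and columns of $L_j$ are rows and columns of the hypermatrix, so their non-zero entries alternate in sign, starting and ending with $+1$. Thus $L_1$ is an ASM with $\langle L_1, E_k \rangle = 1$ for every plane $E_k$ of $\mathcal D_n$. By the definition of transversal, $L_1$ is a transversal of $\mathcal D_n$.

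This contradicts \Cref{thm: diamond no transversal}, whose proof shows that no transversal exists when $n \not\equiv \pm 1 \bmod 6$. Note that the statement of that theorem as typeset has the condition reversed, but its proof establishes exactly the direction we need, via the non-integrality of $Y$. Hence no orthogonal mate exists.

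The only step needing care is verifying that a plane of an ASHM is an ASM, which is immediate from the definition. I do not expect any real obstacle; the content of the corollary lies entirely in the preceding theorem.
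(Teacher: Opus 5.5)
Your proposal is correct and follows the paper's route: the paper states this corollary as a direct consequence of \Cref{thm: diamond no transversal}, and the only implicit step is the one you supply, namely that any plane of an orthogonal mate is an ASM orthogonal to every $E_k$ and hence a transversal of $\mathcal D_n$. You are also right that the theorem as typeset has the congruence condition reversed; its proof (and the introduction) establish that a transversal exists only when $n \equiv \pm 1 \bmod 6$, which is the direction you use.
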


\section{On the maximum cardinality of POIS}\label{sec:pois}

In this section we prove that no set of~$n$ pairwise orthogonal $n \times n$ Italian squares exists. 
We utilise the reinterpretation of the orthogonality of Italian squares in terms of the inner product of their zero portions.

\begin{definition}
For an Italian square~$L$ with planes $L_1, \dots, L_n$, we define the following subspaces of $RC$ and $RC_0$, respectively,
\begin{gather*}
    \RC(L) = \on{span} \{ L_1, \dots, L_n \} ,\\
    \RC_0(L) = \on{span} \{ L_1^0, \dots, L_n^0 \},
\end{gather*}
where $L_i^0$ denotes the zero portion of the plane $L_i$ as defined in Section \ref{sec:transversals}.    
\end{definition}

\begin{lemma} \label{lem:dim_italiansquare_zeroportion}
For an Italian square~$L$ of order $n$, it holds that $\dim(\RC(L)) = n$ and $\dim(\RC_0(L)) = n \!-\! 1$.
\end{lemma}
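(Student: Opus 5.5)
The key observation is that the planes of an Italian square sum to the all-ones matrix. In the ASHM, each vertical line has non-zero entries alternating in sign, beginning and ending with $+1$, so it sums to $1$. Hence
\[ \sum_{k=1}^n L_k = J, \qquad \sum_{k=1}^n L_k^0 = J - n\cdot\tfrac1n J = 0 . \]
The second identity gives one linear dependency among the $L_k^0$, so $\dim(\RC_0(L)) \le n-1$. For the first space, the same identity shows that $J \in \RC(L)$.

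Next I relate the two spaces. Since $L_k = L_k^0 + \frac1n J$, we have $\RC(L) = \RC_0(L) + \on{span}\{J\}$. This sum is direct, because $\RC_0(L)\subseteq \RC_0$ while $J\notin\RC_0$ (its row sums equal $n\neq 0$). Therefore $\dim \RC(L) = \dim \RC_0(L) + 1$, and it suffices to show $\dim \RC(L) = n$.

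For that, I prove that $L_1,\dots,L_n$ are linearly independent. Suppose $\sum_k c_k L_k = 0$ and look at the first row of each plane. The first rows of the ASMs $L_1,\dots,L_n$ are $(0,1)$-vectors with exactly one $1$, since the first row of an ASM is a unit vector. As an $n\times n$ array, these first rows form the first row slice of the ASHM, i.e.\ the $n\times n$ matrix $(a_{1jk})_{j,k}$. This slice is itself an ASM, because its rows and columns are lines of the ASHM.

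Its columns, indexed by $k$, are unit vectors $e_{\sigma(k)}$ for some function $\sigma$. Since every row $j$ of the slice also sums to $1$ and the entries are nonnegative, $\sigma$ is a permutation. Reading coordinate $\sigma(k)$ of the first row of $\sum_k c_k L_k$ then gives $c_k = 0$ for every $k$. So $\dim \RC(L)=n$, and hence $\dim \RC_0(L)=n-1$.

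The main point needing care is justifying that the first row of an ASM contains exactly one $+1$ and no $-1$. This holds because column $j$ of an ASM begins with $+1$, so an entry $-1$ in the first row is impossible. A first row with only $+1$'s and zeros, whose non-zero entries alternate and sum to $1$, has exactly one non-zero entry. The same argument applies to the first-row slice of the ASHM, so the planes restricted to the first row indeed form a permutation pattern.
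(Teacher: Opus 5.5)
Your proof is correct and follows essentially the same route as the paper. The paper also shows linear independence of $L_1,\dots,L_n$ from the fact that the single $1$ in the first row of each plane sits in a distinct column, which it deduces from $\sum_k L_k = J$; your first-row-slice argument uses exactly the same row-sum fact. It then writes $\RC(L) = \RC_0(L) + \operatorname{span}\{J\}$ and notes the intersection is trivial because $\RC_0(L) \subseteq \RC_0$ and $J \notin \RC_0$, just as you do.
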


\begin{proof}
    Let $(L_i)$ be the planes of the Italian square~$L$. Since $\sum L_i = J$ and the first row of each $L_i$ contains precisely one non-zero entry, which is a~$1$, it follows that the coordinate of the non-zero entry of the first row of $L_i$ is different from the one of any other $L_j$.  Therefore no nontrivial linear combination of the $L_i$'s can give the zero matrix, proving the first claim.

As we can write each $L_i$ in terms of its zero portion, namely $L_i=L_i^0 + \frac{1}{n}J$, we have 
    \[ \RC(L)=\RC_0(L) + \operatorname{Span}\{J\}.\]
    Consequently, it holds that 
    \begin{align} \label{eq:dimensionformula_RCP}
    \dim (\RC(L)) = \dim (\RC_0(L) )+ 1 - \dim (\RC_0(L) \cap \operatorname{Span}\{J \}  ).
    \end{align}
    We claim that $\dim \left( \RC_0(L) \cap \on{Span}\{ J \} \right) = 0$.
    This follows from $\RC_0(L) \subseteq \RC_0$ and the fact that a matrix $\lambda J$ for $\lambda \in \mathbb Q$ has row (and column) sum zero only in case $\lambda = 0$.
    Hence, together with the first claim, Eq.~\eqref{eq:dimensionformula_RCP} simplifies to
    \[ n = \dim( \RC_0 (L)) +1,\]
    which completes the proof.
\end{proof}

\begin{lemma} \label{lem:charact_orthogonal_italiansq_zeroportions}
    Two Italian squares $A$ and $B$ are orthogonal if and only if 
    \[ \RC_0(A) \perp \RC_0(B).\]

\end{lemma}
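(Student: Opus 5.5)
The plan is to reduce everything to the plane-wise criterion of Lemma~\ref{lem:char_orth_ASMs_zeroportion} and then pass to spans by bilinearity of the Frobenius inner product. By definition, $A$ and $B$ are orthogonal exactly when $\langle A_i, B_j\rangle = 1$ for all $i,j\in[n]$. Every plane $A_i$ and $B_j$ is an ASM of order $n$, so Lemma~\ref{lem:char_orth_ASMs_zeroportion} shows that each condition $\langle A_i,B_j\rangle=1$ is equivalent to $\langle A_i^0,B_j^0\rangle=0$. Hence $A\perp B$ holds if and only if $\langle A_i^0,B_j^0\rangle=0$ for all $i,j\in[n]$.

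For the forward direction, assume all these inner products vanish. Take arbitrary elements $X=\sum_i\alpha_iA_i^0\in\RC_0(A)$ and $Y=\sum_j\beta_jB_j^0\in\RC_0(B)$, with rational coefficients. By bilinearity,
\[ \langle X,Y\rangle=\sum_{i,j}\alpha_i\beta_j\langle A_i^0,B_j^0\rangle=0 . \]
Thus $\RC_0(A)\perp\RC_0(B)$.

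For the converse, assume $\RC_0(A)\perp\RC_0(B)$. The generators $A_i^0$ and $B_j^0$ lie in the respective spans, so $\langle A_i^0,B_j^0\rangle=0$ for every pair $i,j$. Applying Lemma~\ref{lem:char_orth_ASMs_zeroportion} again gives $\langle A_i,B_j\rangle=1$ for all $i,j$, which is exactly orthogonality of $A$ and $B$.

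There is no genuine obstacle here. The only point to verify is that Lemma~\ref{lem:char_orth_ASMs_zeroportion} applies, i.e.\ that each plane of an Italian square is an ASM of order $n$. This holds because the planes of the associated ASHM have alternating rows and columns beginning and ending with $+1$.
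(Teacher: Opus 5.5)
Your proposal is correct and takes the same route as the paper: both use Lemma~\ref{lem:char_orth_ASMs_zeroportion} to reduce orthogonality of $A$ and $B$ to $\langle A_i^0, B_j^0\rangle = 0$ for all $i,j$. You also write out the bilinearity step from these generators to the spans $\RC_0(A)$ and $\RC_0(B)$, which the paper leaves implicit.
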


\begin{proof}
Let $A$ and $B$ be Italian squares with planes $(A_i)$ and $(B_i)$, respectively. Due to the definition of orthogonal Italian squares, it follows that $A$ is orthogonal to $B$ if and only if $\langle A_i, B_j \rangle=1$ for all $i, j$. From Eq.~\eqref{eq:equivalence_innerprod_zeroportion}, we observe that
\begin{center} 
    $A$ and $B$ are orthogonal ~if and only if~ $\langle A_i^0, B_j^0 \rangle = 0$ for all $i,j$,
\end{center}
where $A_i^0$ and $B_j^0$ are the zero portions of $A_i$ and $B_j$, respectively.
\end{proof}

We are now in a position to prove our main result of this section, an upper bound on the size of a set of POIS.

\begin{theorem}
\label{thm:bound_on_POIS_size}
    There are at most $n \!-\! 1$ POIS of order~$n$.
\end{theorem}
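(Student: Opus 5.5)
The bound will come from a dimension count inside $\RC_0$, combining Lemma~\ref{lem:dim_italiansquare_zeroportion} with Lemma~\ref{lem:charact_orthogonal_italiansq_zeroportions}. Suppose $L^{(1)}, \dots, L^{(m)}$ is a set of POIS of order~$n$. By Lemma~\ref{lem:charact_orthogonal_italiansq_zeroportions}, the subspaces $\RC_0(L^{(1)}), \dots, \RC_0(L^{(m)})$ are pairwise orthogonal with respect to the Frobenius inner product. This inner product is positive definite on rational matrices, so pairwise orthogonal subspaces have trivial pairwise intersections. More strongly, their sum is direct: if $\sum_t v_t = 0$ with $v_t \in \RC_0(L^{(t)})$, then taking the inner product with $v_s$ gives $\langle v_s, v_s\rangle = 0$, and hence $v_s = 0$.

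Every $\RC_0(L^{(t)})$ lies in $\RC_0$, which has dimension $(n-1)^2$. By Lemma~\ref{lem:dim_italiansquare_zeroportion}, each $\RC_0(L^{(t)})$ has dimension $n-1$. The direct sum of the $m$ subspaces therefore has dimension $m(n-1)$, and since it sits inside $\RC_0$ we get
\[ m(n-1) \le (n-1)^2 . \]
For $n \ge 2$ this gives $m \le n-1$, which is the claim.

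The case $n=1$ needs a separate convention. There $\RC_0 = 0$, so the dimension count gives no information. The single $1\times1$ Italian square is self-orthogonal, so the statement should be read with the usual convention for MOLS, and I will note that it is trivial or degenerate for $n=1$.

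The only genuinely delicate points are already settled by the earlier lemmas. One is that the zero portions span exactly an $(n-1)$-dimensional space, which is Lemma~\ref{lem:dim_italiansquare_zeroportion}. The other is that $\dim \RC_0 = (n-1)^2$, which was observed via the $T$-block spanning set $\{T_{i,j;n,n}\}$. If anything requires care, it is checking that these $T$-blocks are linearly independent, so that the dimension is exactly, and in particular at most, $(n-1)^2$. For the upper bound, which is all we need, it suffices that they span $\RC_0$. This follows because any matrix in $\RC_0$ is determined by its top-left $(n-1)\times(n-1)$ block: the last row and column are forced by the zero row and column sums.
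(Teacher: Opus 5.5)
Your argument is correct and is the same dimension count the paper uses: the subspaces $\RC_0(L)$, each of dimension $n-1$ by Lemma~\ref{lem:dim_italiansquare_zeroportion} and pairwise orthogonal by Lemma~\ref{lem:charact_orthogonal_italiansq_zeroportions}, form an orthogonal direct sum inside the $(n-1)^2$-dimensional space $\RC_0$. Your extra details are all sound and fill in steps the paper leaves implicit: directness of the sum from positive definiteness, the bound $\dim\RC_0\le(n-1)^2$ from the top-left $(n-1)\times(n-1)$ block, and the degenerate case $n=1$, which the paper's statement silently excludes.
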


\begin{proof}
Let $S$ be a set of POIS.  Note that for $L \in S$, the pairwise orthogonality of the matrix subspaces $\RC_0(L)$ implies that these form an orthogonal direct sum of subspaces.
We therefore obtain the result as a consequence of Lemma~\ref{lem:charact_orthogonal_italiansq_zeroportions} and Lemma~\ref{lem:dim_italiansquare_zeroportion} together with the fact that $\dim(\RC_0) = (n \!-\! 1)^2$.
\end{proof}

From the above discussion, it follows that POIS of size $n \!-\! 1$ correspond to orthogonal decompositions of $\RC_0$ into $(n \!-\! 1)$-dimensional spaces admitting a basis consisting of zero portions of ASMs.

\begin{example}
 Consider the first pair of orthogonal Italian squares from Example \ref{ex:noextend}, and denote the planes of each by $A_i$ and $B_i$. If $X\in \mathrm{RC}$, then $\langle A_i - \frac 1 4 J, X \rangle = \langle B_i - \frac 1 4 J, X \rangle = 0$ for each~$i$.
 Computing the intersection of RC with the space of solutions~$M$ to $\langle A_i, M \rangle = \langle B_i, M \rangle = 0$ for every~$i$, we obtain a space spanned by the following four matrices.
\[
\begin{pmatrix}
2 & 0 & 0 & 0 \\
1 & \!-1\! & 1 & 1 \\
\!-2\! & 0 & 2 & 2 \\
1 & 3 & \!-1\! & \!-1\!
\end{pmatrix}
,
\begin{pmatrix}
0 & 2 & 0 & 0 \\
\!-1\! & 1 & 1 & 1 \\
2 & 0 & 0 & 0 \\
1 & \!-1\! & 1 & 1
\end{pmatrix}
,
\begin{pmatrix}
0 & 0 & 2 & 0 \\
1 & 1 & 1 & \!-1\! \\
0 & 0 & 0 & 2 \\
1 & 1 & \!-1\! & 1
\end{pmatrix}
,
\begin{pmatrix}
0 & 0 & 0 & 2 \\
1 & 1 & \!-1\! & 1 \\
2 & 2 & 0 & \!-2\! \\
\!-1\! & \!-1\! & 3 & 1
\end{pmatrix}
\]
Since any ASM must have its first row being a standard basis vector, no linear combination of these matrices can be an ASM. Hence not only are the pair of Italian squares not extendable as a set of POIS, they do not even possess a common transversal. 
\end{example}

\section{Kronecker-like product for Italian squares}\label{sec:kronecker}

In this section, we define a binary operation $\ast$ for Italian squares, which generalises a well-known Kronecker-like product for Latin squares (see, for example, \cite[Th.~22.3]{VanLintWilson92}). Given two pairs $(A,A^\prime)$ and $(B,B^\prime)$ of orthogonal Italian squares of respective order $k$ and $m$, we show that $(A \ast B, A^\prime \ast B^\prime)$ is a pair of orthogonal Italian squares of order $km$, mirroring the Latin case. We then use this product to prove that there exists a pair of orthogonal Italian squares of order $km$ for all $k\ne2$ and $m \in \{5,6,7,11\}$ and sets of three POIS of order $7k$ for all $k \not\in \{2,3,6,10\}$ (none of which are Latin squares).

\begin{definition}\label{cell_addition}
    Let $C_1$ and $C_2$ be cells in Italian squares. Define $C_1 + C_2$ to be the cell containing the signed formal sum of each pair of symbols, one from $C_1$ and one from $C_2$. The sign of each summand in $C_1+C_2$ is the product of the signs of the corresponding pair of symbols in $C_1$ and $C_2$. Addition and multiplication of scalars to $C_1$ is similarly defined symbol-by-symbol.
\end{definition}

\begin{example}
    \[\begin{array}{|c|}\hline 1 \\ \hline\end{array} + 3\left(\:\begin{array}{|c|}\hline \SmallArray{1-2\\+3} \\ \hline\end{array} - 1\right)
    = \begin{array}{|c|}\hline 1 \\ \hline\end{array} + 3\left(\:\begin{array}{|c|}\hline \SmallArray{0-1\\+2} \\ \hline\end{array}\:\right)
    = \begin{array}{|c|}\hline 1 \\ \hline\end{array} + \begin{array}{|c|}\hline \SmallArray{0-3\\+6} \\ \hline\end{array}
    = \begin{array}{|c|}\hline \SmallArray{1-4\\+7} \\ \hline\end{array}\]
    \[\begin{array}{|c|}\hline \SmallArray{1-2\\+3} \\ \hline\end{array} + 3\left(\:\begin{array}{|c|}\hline \SmallArray{1-2\\+3} \\ \hline\end{array} - 1\right)
    = \begin{array}{|c|}\hline \SmallArray{1-2\\+3} \\ \hline\end{array} + 3\left(\:\begin{array}{|c|}\hline \SmallArray{0-1\\+2} \\ \hline\end{array}\:\right)
    = \begin{array}{|c|}\hline \SmallArray{1-2\\+3} \\ \hline\end{array} + \begin{array}{|c|}\hline \SmallArray{0-3\\+6} \\ \hline\end{array}
    = \begin{array}{|c|}\hline \SmallArray{1-2+3\\-4+5-6\\+7-8+9} \\ \hline\end{array}\]
\end{example}

Given an $m \times m$ matrix $A=(a_{i,j})$ and an $n \times n$ matrix $B$, recall the \emph{Kronecker product} \cite[p.~201]{VanLintWilson92} of $A$ and $B$, defined to be the following $mn \times mn$ matrix.
\[A \otimes B = \begin{pmatrix}
    a_{1,1}B & a_{1,2}B & \dots & a_{1,m}B \\
    a_{2,1}B & a_{2,2}B & \ddots & \vdots \\
    \vdots & \ddots & \ddots & a_{m-1,m}B \\
    a_{m,1}B & \dots & a_{m,m-1}B & a_{m,m}B
\end{pmatrix}\]

\begin{definition}
Given an $m \times m$ array $A$ and an $n \times n$ array $B$, define $A*B$ to be the $mn \times mn$ array given by 
\[
A*B = (A-J_m)\otimes (nJ_n) + J_m\otimes B,
\]
where $\otimes$ is the standard Kronecker product, addition and subtraction of cells is defined as in Definition \ref{cell_addition}, and $J_i$ is the all-ones matrix of size $i$.
\end{definition}

\begin{example}
\[
\begin{array}{|c|c|c|}
\hline
    1 &2& 3 \\
\hline
     2&\SmallArray{1-2\\+3}&2 \\
\hline
     3&2&1 \\
\hline
\end{array}
*
\begin{array}{|c|c|}
\hline
    1 &2 \\
\hline
     2&1 \\
\hline
\end{array}
=
\begin{array}{|c|c||c|c||c|c|}
\hline
  1  & 2      &  3 &  4     &  5 &  6 \\
\hline
  2  & 1      &  4 &  3     &  6 &  5 \\
\hline\hline
  3  &  4     & \SmallArray{1-3\\+5}  &   \SmallArray{2-4\\+6}    &  3 & 4  \\
\hline
  4  &  3     &  \SmallArray{2-4\\+6} &   \SmallArray{1-3\\+5}    &  4 &  3 \\
\hline\hline
  5  &  6     & 3  &  4     &  1 & 2  \\
\hline
  6  &  5     &  4 &  3     &  2 & 1  \\
\hline
\end{array}
\]
\[
\begin{array}{|c|c|}
\hline
    1 &2 \\
\hline
     2&1 \\
\hline
\end{array}
*
\begin{array}{|c|c|c|}
\hline
    1 &2& 3 \\
\hline
     2&\SmallArray{1-2\\+3}&2 \\
\hline
     3&2&1 \\
\hline
\end{array}
=
\begin{array}{|c|c|c||c|c|c|}
\hline
1&2   &3       &                            4 &  5 &6      \\
\hline
2& \SmallArray{1 - 2\\ +3}  &  2     &      5 &  \SmallArray{ 4- 5\\ +6} &  5   \\
\hline
3& 2  & 1      &                              6 &5   & 4     \\
\hline
\hline
4&  5 & 6      &   1 &2   & 3  \\
\hline
5& \SmallArray{ 4- 5\\ +6}  &   5 & 2 &  \SmallArray{1 - 2\\ +3} & 2  \\
\hline
6& 5  &  4     &    3 &  2 & 1  \\
\hline
\end{array}
\]
\[
\begin{array}{|c|c|c|}
\hline
    1 &2& 3 \\
\hline
     2&\SmallArray{1-2\\+3}&2 \\
\hline
     3&2&1 \\
\hline
\end{array}
*
\begin{array}{|c|c|c|}
\hline
    1 &2& 3 \\
\hline
     2&\SmallArray{1-2\\+3}&2 \\
\hline
     3&2&1 \\
\hline
\end{array}
=
\begin{array}{|c|c|c||c|c|c||c|c|c|}
\hline
1&2   &3       &                            4 &  5 &6       &      7 &   8&9   \\
\hline
2& \SmallArray{1 - 2\\ +3}  &  2     &      5 &  \SmallArray{ 4- 5\\ +6} &  5     &       8& \SmallArray{ 7- 8\\ +9}  &  8 \\
\hline
3& 2  & 1      &                              6 &5   & 4      &     9  &  8 & 7  \\
\hline
\hline
4&  5 & 6      &      \SmallArray{1-4\\+7} &  \SmallArray{2-5\\+8} &  \SmallArray{3-6\\+9}     &       4&  5 &6   \\
\hline
5& \SmallArray{ 4- 5\\ +6}  &  5     &    \SmallArray{2-5\\+8}   &  \SmallArray{1-2+3\\-4+5-6\\+7-8+9} &  \SmallArray{2-5\\+8}     &      5 & \SmallArray{ 4- 5\\ +6}  & 5  \\
\hline
6& 5  &  4     &     \SmallArray{3-6\\+9}  &  \SmallArray{2-5\\+8} &    \SmallArray{1-4\\+7}   &      6 & 5  & 4  \\
\hline
\hline
7& 8  &  9     &      4 & 5  &  6     &      1 &2   & 3  \\
\hline
8&  \SmallArray{ 7- 8\\ +9} &   8    &      5 &  \SmallArray{ 4- 5\\ +6} &  5     &      2 &  \SmallArray{1 - 2\\ +3} & 2  \\
\hline
9&  8 &   7    &    6   & 5  &   4    &      3 &  2 & 1  \\
\hline
\end{array}
\]
\end{example}

It is known that for MOLS $\{A_1,A_2,\dots,A_k\}$ and $\{B_1,B_2,\dots,B_k\}$ of order $m$ and $n$, respectively, $\{A_1* B_1,A_2* B_2,\dots,A_k* B_k\}$ is a set of $mn \times mn$ MOLS \cite[Th.~22.3]{VanLintWilson92}. We now prove the equivalent result for Italian squares.

\begin{lemma}\label{lem: Kron IS}
    Let $A$ be an Italian square of order $m$ and $B$ be an Italian square of order $n$. Then $A * B$ is an Italian square.
\end{lemma}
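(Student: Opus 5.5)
The plan is to translate the cell arithmetic of the definition of $A*B$ into a statement about planes, i.e.\ about the underlying ASHMs, and then verify the three alternating-sign conditions directly.

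\textbf{Step 1: unwind the definition.} The array $(A-J_m)\otimes(nJ_n)+J_m\otimes B$ has, in block $(p,q)$ (with $p,q\in[m]$) and position $(s,t)$ inside the block (with $s,t\in[n]$), the cell $n(A_{pq}-1)+B_{st}$. Using Definition~\ref{cell_addition}, a symbol $c\in[mn]$ can be written uniquely as $c=n(a-1)+b$ with $a\in[m]$, $b\in[n]$. The signed coefficient of $c$ in this cell is then the product of the coefficient of $a$ in $A_{pq}$ and the coefficient of $b$ in $B_{st}$. Hence the plane $(A*B)_c$ equals the Kronecker product $A_a\otimes B_b$, where $A_a$ and $B_b$ are the planes of $A$ and $B$. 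Equivalently, the ASHM of $A*B$ is the 3-dimensional Kronecker product of the ASHMs of $A$ and $B$:
\[ (A*B)_{(p,s),(q,t),(a,b)}=A_{pqa}\,B_{stb}. \]
Each index is ordered lexicographically, which matches both the Kronecker block ordering and the ordering $c=n(a-1)+b$.

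\textbf{Step 2: a one-dimensional lemma.} Let $u\in\{0,\pm1\}^m$ and $v\in\{0,\pm1\}^n$ both be alternating sign vectors, meaning their nonzero entries alternate and start and end with $+1$. I claim that $u\otimes v$, indexed lexicographically, is again alternating. The nonzero entries of $u\otimes v$ are listed block by block. Within the block of a nonzero $u_p$ they read $u_p$ times the nonzero entries of $v$. These alternate, starting and ending with $u_p$, because $v$ starts and ends with $+1$. Consecutive nonzero blocks correspond to consecutive nonzero entries of $u$, which have opposite signs. So the last entry of one block is $u_p$ and the first entry of the next block is $u_{p'}=-u_p$, and the alternation continues across block boundaries. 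The whole sequence begins with $u_{\text{first}}=+1$ and ends with $u_{\text{last}}=+1$.

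\textbf{Step 3: apply the lemma to every line.} A row of the ASHM of $A*B$ fixes $(p,s)$ and $(a,b)$ and varies $(q,t)$. Its entries are $A_{pqa}B_{stb}$, which is exactly the Kronecker product of a row of $A$'s ASHM with a row of $B$'s ASHM. Columns work the same way, and so do vertical lines, where the index $(a,b)$ varies lexicographically. Every such factor is an alternating sign vector, so by Step 2 every line of the product is alternating. Therefore $A*B$ is an ASHM, and reading it back through the bijection shows that $A*B$ is an Italian square of order $mn$.

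The main obstacle is the bookkeeping in Step 1. The cell arithmetic of Definition~\ref{cell_addition} must be shown to really produce the formal sum over pairs with multiplied signs, and in particular that no cancellation or merging of symbols occurs. This holds because the map $(a,b)\mapsto n(a-1)+b$ is a bijection $[m]\times[n]\to[mn]$, so distinct pairs give distinct symbols and no two terms ever combine. I would state this explicitly. I would also check that the ``ascending order'' convention for the formal sum in each cell agrees with the lexicographic order on $(a,b)$, which it does because that map is monotone.
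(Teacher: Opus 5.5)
Your proposal is correct and follows essentially the same route as the paper: both identify the ASHM of $A*B$ as the three-dimensional Kronecker product $Z_{(i_1-1)n+i_2,(j_1-1)n+j_2,(k_1-1)n+k_2}=X_{i_1j_1k_1}Y_{i_2j_2k_2}$, then check that alternation along each line carries through within each block and across block boundaries. Your Step 1 (the cell arithmetic gives exactly this tensor, with no merging of symbols and with ascending order matching lexicographic order) is argued more carefully than in the paper, which simply says it follows from the definition of $*$.
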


\begin{proof}
    As Italian squares are in bijection with ASHMs, we prove the equivalent result for the ASHMs $X$ and $Y$, respectively corresponding to $A$ and $B$. It follows from the definition of $*$ that the Italian square resulting from $A * B$ corresponds to an $mn \times mn \times mn$ hypermatrix $Z$ with the following entries, for $i_1,j_1,k_1\in[m]$ and $i_2,j_2,k_2\in[n]$.
    \[Z_{(i_1-1)n+i_2, (j_1-1)n+j_2,(k_1-1)n+k_2} = X_{i_1,j_1,k_1}Y_{i_2,j_2,k_2}\]
    For fixed $j_1,j_2,k_1,k_2$, consider moving along all values of $(i_1,i_2)\in[m]\times[n]$. All values in this column are zero until the first position where $X_{i_1,j_1,k_1},Y_{i_2,j_2,k_2}$ are both non-zero. Since $X$ and $Y$ are ASHMs, we have $X_{i_1,j_1,k_1} = Y_{i_2,j_2,k_2} = 1$ and therefore the first non-zero in the column is $+1$. The non-zero entries in the column then alternate in sign following the pattern of the corresponding column of $Y$. In particular, the last before $i_1$ increases is $+1$. If there are any more non-zero entries in the corresponding $X$ column, the next will be $-1$. Therefore the non-zeros continue to alternate in sign, this time beginning and ending with $-1$. This continues until the final non-zero in the corresponding $X$ column, which again is $+1$. Therefore the non-zero entries in the entire column alternate in sign, beginning and ending with $+1$. The same is true for rows and vertical lines. Therefore $Z$ is an ASHM, and $A * B$ is an Italian square.
\end{proof}

\begin{lemma}\label{lem: Kron ortho}
    Let $A, A'$ be orthogonal Italian squares of order $m$, and $B, B'$ orthogonal Italian squares of order $n$. Then $A * B$ is orthogonal to $A' * B'$.
\end{lemma}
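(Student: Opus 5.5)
The plan is to work at the level of planes, using the hypermatrix description from the proof of Lemma~\ref{lem: Kron IS}. If $X, X'$ are the ASHMs of $A, A'$ and $Y, Y'$ those of $B, B'$, then the plane of $A * B$ indexed by $(k_1-1)n+k_2$ has $((i_1-1)n+i_2,(j_1-1)n+j_2)$-entry $X_{i_1,j_1,k_1}Y_{i_2,j_2,k_2}$. In other words, this plane is exactly the Kronecker product $A_{k_1}\otimes B_{k_2}$ of the planes $A_{k_1}$ of $A$ and $B_{k_2}$ of $B$. The same holds for $A' * B'$, whose planes are $A'_{k_1'}\otimes B'_{k_2'}$. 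The first step is to record this identification explicitly, checking the index bookkeeping against the definition $A*B=(A-J_m)\otimes(nJ_n)+J_m\otimes B$. Symbol $k$ of $A$ with sign $\epsilon$ in cell $(i_1,j_1)$ contributes symbol $(k-1)n+k_2$ with sign $\epsilon\delta$ in cell $((i_1-1)n+i_2,(j_1-1)n+j_2)$ whenever symbol $k_2$ has sign $\delta$ in cell $(i_2,j_2)$ of $B$. This is precisely the statement that the plane is a Kronecker product of planes.

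The second step uses the multiplicativity of the Frobenius inner product under Kronecker products:
\[ \langle P\otimes Q, P'\otimes Q'\rangle = \operatorname{tr}\bigl((P\otimes Q)(P'\otimes Q')^\top\bigr) = \operatorname{tr}(PP'^\top)\operatorname{tr}(QQ'^\top) = \langle P,P'\rangle\langle Q,Q'\rangle . \]
This follows either from the mixed-product property and $\operatorname{tr}(M\otimes N)=\operatorname{tr}M\operatorname{tr}N$, or directly by expanding the entrywise sum. Hence for any plane indices,
\[ \langle A_{k_1}\otimes B_{k_2}, A'_{k_1'}\otimes B'_{k_2'}\rangle = \langle A_{k_1},A'_{k_1'}\rangle\langle B_{k_2},B'_{k_2'}\rangle = 1\cdot 1 = 1, \]
by the orthogonality of $A,A'$ and of $B,B'$. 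Every pair of planes of $A*B$ and $A'*B'$ arises this way, so the two squares are orthogonal. Lemma~\ref{lem: Kron IS} already guarantees that both $A*B$ and $A'*B'$ are Italian squares, so the orthogonality definition applies.

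The only real obstacle is the first step: making sure the cell arithmetic of Definition~\ref{cell_addition} really yields the plane $A_{k_1}\otimes B_{k_2}$, with the correct signs and with no cancellation between summands. Cancellation cannot occur, because each pair of symbols $(k_1,k_2)$ yields a distinct symbol $(k_1-1)n+k_2$. The proof of Lemma~\ref{lem: Kron IS} essentially asserts this correspondence, so I would cite it and add a one-line verification of the index map $(k_1,k_2)\mapsto(k_1-1)n+k_2$. The inner-product computation is then routine.
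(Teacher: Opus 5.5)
Your proposal is correct and matches the paper's proof: identify the plane of $A*B$ indexed by $(k_1-1)n+k_2$ with $A_{k_1}\otimes B_{k_2}$, then use $\langle P\otimes Q, P'\otimes Q'\rangle=\langle P,P'\rangle\langle Q,Q'\rangle$ to get $1\cdot 1=1$ for every pair of planes. Your explicit check of the index map against $A*B=(A-J_m)\otimes(nJ_n)+J_m\otimes B$ is slightly more careful than the paper, which writes the plane identification loosely as $X_{i_1}*Y_{i_2}$.
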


\begin{proof}
    We again prove the equivalent result for ASHMs $X$, $X'$, $Y$, $Y'$, $Z$, $Z'$ respectively corresponding to Italian squares $A$, $A'$, $B$, $B'$, $A * B$, $A' * B'$. For any layers $Z_i$ of $Z$ and $Z'_j$ of $Z'$ we show that $\langle Z_i, Z'_j \rangle = 1$. Indeed, writing $i = (i_1 - 1)n + i_2$ and $j = (j_1 - 1)n + j_2$, it follows that $Z_i = X_{i_1} * Y_{i_2}$ and $Z'_j = X'_{j_1} * Y'_{j_2}$, where $X_{i_1}, X'_{j_1}$ and $Y_{i_2}, Y'_{j_2}$ are layers of $X, X'$ and $Y, Y'$, respectively. We conclude that $\langle Z_i, Z'_j \rangle = \langle X_{i_1} * Y_{i_2}, X'_{j_1} * X'_{j_2} \rangle = \langle X_{i_1}, X'_{j_1} \rangle \langle Y_{i_2}, Y'_{j_2} \rangle = 1$, since $\langle X_{i_1}, X'_{j_1} \rangle = 1 = \langle Y_{i_2}, Y'_{j_2} \rangle$ by orthogonality of $A, A'$ and $B, B'$. 
\end{proof}

The following result is a consequence of Lemmas~\ref{lem: Kron IS} and~\ref{lem: Kron ortho}, along with examples of POIS of small order.

\begin{theorem}\label{thm:explicit}
    There exists a pair of POIS of order $km$, neither of which is a Latin square, for $k\ne2$ and $m \in \{5,6,7,11\}$. Moreover, there exists a set of three POIS of order $7k$, none of which is a Latin square, for all $k\not\in\{2,3,6,10\}$.
\end{theorem}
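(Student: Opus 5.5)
The plan is to combine the Kronecker-like product with known orthogonal Latin squares and a few explicit small-order POIS that are not Latin squares. The key structural observation is that if $B, B'$ are orthogonal Italian squares and $B$ is not a Latin square, then for any orthogonal pair $A, A'$ the product $A * B$ is not a Latin square either. Indeed, by the entry formula $Z_{(i_1-1)n+i_2,(j_1-1)n+j_2,(k_1-1)n+k_2} = X_{i_1j_1k_1}Y_{i_2j_2k_2}$ from the proof of Lemma~\ref{lem: Kron IS}, a $-1$ entry of $Y$ multiplied by a $+1$ entry of $X$ gives a $-1$ entry of $Z$. The same holds with the roles of the factors swapped. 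So whenever one factor pair consists of non-Latin squares, both products are non-Latin, and by Lemmas~\ref{lem: Kron IS} and~\ref{lem: Kron ortho} they form an orthogonal pair of order $km$. The same argument applies componentwise to triples, since Lemma~\ref{lem: Kron ortho} applies to each pair $A_i * B_i$, $A_j * B_j$.

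For the first claim I would exhibit explicit pairs of orthogonal non-Latin Italian squares of orders $5, 6, 7, 11$. Order $6$ is Example~\ref{6x6}. For orders $5$, $7$ and $11$ I would give explicit examples, found by computer search or by modifying a finite-field MOLS pair with a local ASM switch, and verify $\langle A_i, A'_j\rangle = 1$ directly. I would then take $k \ne 2$. For $k \notin \{2,6\}$ a pair of MOLS of order $k$ exists by Bose--Shrikhande--Parker (with $k=1$ trivial), so the product gives the result. For $k=6$ there are no MOLS, so instead I would use the order-$6$ non-Latin pair of Example~\ref{6x6} as the first factor; the observation above still makes both products non-Latin. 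In effect the first factor only needs to be some orthogonal pair of Italian squares of order $k$, and such pairs exist for all $k \ne 2$.

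For the second claim I would exhibit three POIS of order $7$, none of them Latin, again by explicit construction or computer search, perhaps starting from the six MOLS of order $7$ and inserting $T$-block/ASM modifications. I would combine these with three MOLS of order $k$. By known results on MOLS, three MOLS of order $k$ exist for all $k \notin \{2,3,6,10\}$; $k=1$ is trivial, and for $k=3$ the maximum is $2$. Applying the product componentwise then yields three POIS of order $7k$, each non-Latin.

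The main obstacle is producing and verifying the explicit small examples, especially the triple of order $7$ and the pairs of orders $5$, $7$ and $11$. Each check amounts to $n^2$ inner products per pair, which is routine but must be presented concretely. A secondary point is citing the correct existence results for two and three MOLS: three MOLS are known for all orders except $2, 3, 6$ and possibly $10$.
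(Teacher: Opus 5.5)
Your reduction is the same as the paper's. You take products $A_i * B_i$ of small non-Latin POIS with orthogonal Italian squares of order $k$: two MOLS for $k\notin\{2,6\}$, the order-$6$ Brualdi--Dahl pair for $k=6$, and three MOLS for $k\notin\{2,3,6,10\}$. You then invoke the two product lemmas. Your entry-formula argument that the products are non-Latin is correct: a $-1$ of one factor times a $+1$ of the other gives a $-1$. The paper leaves that step implicit, so spelling it out is a plus.

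The gap is that you never establish the base cases, and they are the whole substance of the theorem. You need a pair of orthogonal Italian squares, neither Latin, of each order $5$, $7$, $11$, and three POIS of order $7$, none Latin. Nothing in your argument shows these exist. ``Found by computer search or by a local ASM switch of a finite-field MOLS pair'' is a search strategy, not a proof. You give no argument that the alternating-sign condition in all three directions, or the conditions $\langle A_i, A'_j\rangle = 1$, survive such a switch. Existence is genuinely not automatic: by the paper's exhaustive search there is no such pair of order $4$, and no set of three pairwise orthogonal non-Latin Italian squares of order $5$.

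The paper closes the gap by writing the squares down:
\begin{itemize}
\item for $m=5$, an explicit pair $A_1, A_2$;
\item for $m=7$, the triple $\{\mathcal D_7, A_1, A_2\}$, i.e.\ the diamond Italian square plus two explicit squares;
\item for $m=11$, the diamond $\mathcal D_{11}$ together with an explicit mate $A_2$.
\end{itemize}
The diamonds are natural candidates, since the paper's lemma gives $\mathcal D_n$ a transversal when $n\equiv \pm 1 \bmod 6$. But one transversal is far from an orthogonal mate, so the mates still have to be exhibited and checked. Until you produce such squares and verify the inner products, your argument proves the first claim only for $m=6$ and the second claim not at all.
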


\begin{proof}
    Example \ref{6x6} gives us a pair of POIS of order $k=6$. For all other $k \ge 3$, there exist a pair of MOLS of order $k$. We construct a pair of POIS $A_1 * B_1$ and $A_2 * B_2$ of the desired order, where $B_1$ and $B_2$ have order $k$, and $A_1$ and $A_2$ are the following Italian squares of order $m$.

    \underline{$m = 5$:}
    \[A_1 = \begin{array}{|c|c|c|c|c|}
            \hline
            1&3&4&2&5\\
            \hline
            2&1&3&5&4\\
            \hline
            3&\SmallArray{2-3\\+5}&1&4&3\\
            \hline
            4&3&\SmallArray{2-3\\+5}&3&1\\
            \hline
            5&4&3&1&2\\
            \hline
        \end{array} ~, \quad A_2 = \begin{array}{|c|c|c|c|c|}
            \hline
            4&1&2&5&3\\
            \hline
            3&2&4&1&5\\
            \hline
            2&4&\SmallArray{1-2+3\\-4+5}&4&2\\
            \hline
            1&5&2&3&4\\
            \hline
            5&3&4&2&1\\
            \hline
        \end{array}\]
        
    \underline{$m = 6$:} $A_1$ and $A_2$ of Example \ref{6x6}.

    \underline{$m = 7$:} $\{\mathcal D_7, A_1, A_2\}$ form a set of 3 POIS. There exists a set of 3 MOLS of order $k$ for all $k\not\in\{1,2,3,6,10\}$ \cite[Table 3.88]{ColbournDinitz06}. The result for three POIS follows.
    \[A_1 = \begin{array}{|c|c|c|c|c|c|c|}
            \hline
            1&4&3&2&5&6&7\\
            \hline
            5&3&7&1&6&2&4\\
            \hline
            4&\SmallArray{2-4\\+6}&4&7&1&5&3\\
            \hline
            6&7&2&5&4&3&1\\
            \hline
            7&5&6&3&2&\SmallArray{1-2\\+4}&2\\
            \hline
            2&4&1&6&3&7&5\\
            \hline
            3&1&5&4&7&2&6\\
            \hline
        \end{array} ~, \quad
        A_2 = \begin{array}{|c|c|c|c|c|c|c|}
            \hline
            1&2&3&6&5&4&7\\
            \hline
            4&6&2&7&1&5&3\\
            \hline
            5&3&7&1&4&\SmallArray{2-4\\+6}&4\\
            \hline
            7&5&4&3&6&1&2\\
            \hline
            6&\SmallArray{4-6\\+7}&6&5&2&3&1\\
            \hline
            3&1&5&2&7&4&6\\
            \hline
            2&6&1&4&3&7&5\\
            \hline
        \end{array}\]

        \underline{$m = 11$:} $A_1 = \mathcal D_{11}$ and
    \[A_2 = \begin{array}{|c|c|c|c|c|c|c|c|c|c|c|}
    \hline
            1 & 2 & 3 & 4 & 5 & 6 & 7 & 8 & 9 & 10 & 11\\ \hline
            2 & 3 & \SmallArray{1-3\\+4} & \SmallArray{3-4\\+5} & \SmallArray{4-5\\+8} & 5 & 10 & \SmallArray{6-8\\+11} & 8 & 7 & 9\\ \hline
            8 & 7 & 9 & 10 & 6 & 1 & \SmallArray{5-10\\+11} & 2 & \SmallArray{3-8\\+10} & 4 & 8\\ \hline
            7 & \SmallArray{6-7\\+10} & 8 & 11 & \SmallArray{5-8\\+9} & \SmallArray{4-5\\+7} & 8 & 1 & 5 & 3 & 2\\ \hline
            9 & 7 & 11 & 6 & \SmallArray{8-9\\+10} & 5 & \SmallArray{4-7\\+9} & 3 & \SmallArray{2-3\\+7} & 1 & 3\\ \hline
            11 & 9 & 5 & 4 & 3 & 2 & 10 & 7 & 6 & 8 & 1\\ \hline
            4 & 11 & \SmallArray{3-4\\+10} & 9 & 7 & 8 & 6 & 4 & 1 & \SmallArray{2-4\\+5} & 4\\ \hline
            10 & 5 & \SmallArray{2-5\\+7} & 8 & \SmallArray{1-8\\+11} & 9 & 3 & 5 & 8 & 4 & 6\\ \hline
            3 & 4 & 5 & 7 & \SmallArray{2-7\\+8} & 11 & \SmallArray{1-4\\+7} & 9 & 4 & 6 & 10\\ \hline
            5 & 8 & 6 & 1 & 7 & \SmallArray{3-8\\+10} & \SmallArray{2-3\\+4} & 8 & \SmallArray{3-5\\+11} & 9 & 5\\ \hline
            6 & 1 & 4 & 2 & 9 & 8 & 3 & 10 & 5 & 11 & 7\\ \hline
        \end{array} ~. \]
\end{proof}

\section{Conclusion and future work}\label{sec:conclude}

In this paper we investigated a natural generalisation of Latin squares in which permutation
matrices are replaced with alternating sign matrices (ASMs). Building on the identification
between Latin squares and permutation hypermatrices, we studied alternating sign hypermatrices, in which
each planar slice of all three types is an ASM. We introduced the notation of \emph{Italian
squares} as a compressed representation of such hypermatrices, better suited to the study of
orthogonality than the Latin-like squares of Brualdi and Dahl. Within this framework we addressed several natural problems: orthogonality at the
local level (orthogonal mates for a single ASM in \Cref{sec:mate}, and existence of transversals of an Italian
square in \Cref{sec:transversals}), orthogonality at the global level (pairwise orthogonality between Italian squares, and the maximum size of such a set in \Cref{sec:pois}), systematic constructions of pairs of orthogonal Italian squares
via a Kronecker-like product in \Cref{sec:kronecker}, and partial structure completion through an analogue of the
Frobenius-K\"onig theorem in \Cref{sec:frob}. These results also resolve several open questions from
Brualdi and Dahl~\cite{BrualdiDahl18,BrualdiDahl24}.

Several natural questions arise from this work, which we now discuss. \medskip

\noindent\textbf{Constructions of POIS.}
In the case of Latin squares, it is well-known that for each $n > 1$, with the exception of $n = 2, 6$, there exist a pair of orthogonal Latin squares of order $n$. 
One of the original motivations for studying Italian squares is the suggestion of Brualdi and Dahl~\cite{BrualdiDahl18} that they (under the alternating sign hypermatrix name) might allow the bound on the number of MOLS to be exceeded.
Our bound of $n \!-\! 1$ of \Cref{thm:bound_on_POIS_size} matches the Latin square general bound, but this does not preclude Italian squares from breaking records for specific values of $n$. 
The case $n = 6$ already provides one such example: a pair of POIS of order~$6$ exists
despite the classical impossibility of a pair of MOLS of that order. On the other hand, exhaustive computer search indicates that a pair of orthogonal Italian squares, neither of which is Latin, can only exist for $n \ge 5$. 
Using the Kronecker-like product of Section~\ref{sec:kronecker}, we constructed such pairs for orders $5k$, $6k$, $7k$, and $11k$ for all $k \neq 2$. 
Extending this to all orders will require additional constructions, and it would be particularly interesting to find analogues of the finite-field constructions that underpin the theory of MOLS.

\vspace{-2\baselineskip}
\begin{quote}
\begin{problem}
Does there exist a pair of orthogonal Italian squares, not both Latin (resp.\ neither Latin), for each $n \ge 5$?
\end{problem}
\end{quote}

The first open case where the bound on MOLS could be strictly exceeded by POIS is $n = 10$, where it is unknown whether there exist three POIS. It would also be of interest to determine, for non-prime-power~$n$, whether $n \!-\! 1$ POIS always exist at all.
\vspace{-2\baselineskip}
\begin{quote}
\begin{problem}
Are there $n\in\mathbb{N}$ for which the maximum number of POIS exceeds the maximum number of MOLS of order $n$? Moreover, are there non-prime-power $n$ with a set of $n \!-\! 1$ POIS of order~$n$?
\end{problem}
\end{quote}

\medskip
\noindent\textbf{The diamond Italian square.}
We have shown that the diamond Italian square $\mathcal D_n$ admits a
transversal if and only if $n \equiv \pm 1 \bmod 6$.  
It remains open whether there are at least $n$ such transversals, and whether $n$ of them can be arranged to form an Italian square. 
If this is the case, and at least one transversal is not a permutation matrix, this would provide a systematic construction of a set of two POIS, neither of which is a Latin square.

\medskip
\noindent\textbf{Existence problems in~$\W_n$.}
Section \ref{sec:frob} introduces the set $\W_n$ consisting of all $(0,\pm1)$-matrices in which the non-zero entries of each row and column alternate in sign, and the sum of each row/column is in $\{0,\pm1\}$. In addition to ASMs of order $n$ themselves, $\W_n$ contains toroidal ASMs of order $n$ and other generalisations. The \emph{Gale-Ryser Theorem} \cite{Gale57, Ryser57} gives an existence condition for $(0,1)$-matrices with specified row and column sums. One could consider the following generalisation.
\vspace{-2\baselineskip}
\begin{quote}
\begin{problem} For which $(0,\pm1)$-vectors $R$ and $S$ of order $n$ does there exist $X \in \W_n$ with row-sums $R$ and column-sums $S$?
\end{problem}
\end{quote}

\medskip
\noindent\textbf{Linear-algebraic methods.}
The zero-portion perspective of Section~\ref{sec:transversals} reduces orthogonality
of Italian squares to the geometric condition $\RC_0(P) \perp \RC_0(Q)$, and reduces the
existence of a transversal to a linear feasibility problem over the integers. We expect that
these linear-algebraic methods can be developed further into a more systematic toolkit, for
instance by using the structure of the solution spaces to guide both theoretical analysis and
computational search.

\medskip
\noindent\textbf{Computational aspects.}
Several results in this paper, including the characterisation in Example~\ref{ex:noextend},
relied on exhaustive computer search. Improving the efficiency of such searches, and extending
them to larger orders, would help clarify the landscape of POIS and inform future results and conjectures.

Our current state of knowledge is summarised by the table below, which for order~$n$ gives the largest size of a set of POIS containing exactly~$i$ Italian squares which are not Latin squares.  We use a simple enumeration technique which generates all ASM planes of orthogonal Italian squares consecutively with an early-abort strategy.

\begin{table}[h]\centering

\begin{tabular}{|c||c|c|c|c|c|}\hline
$n \backslash i$ & 0 & 1 & 2 & 3 & 4 \\\hline
4 & 3 & 2 & 0 & 0 & - \\
5 & 4 & 3 & 2 & 0 & 0 \\
6 & 1 & 2 & 2 & ? & ? \\
7 & 6 & $\ge 2$ & $\ge 2$ & $\ge 3$ & ? \\\hline
\end{tabular}
\end{table}

For example, if $n = 6$ no set of three POIS containing a Latin square exists, but we do not know yet whether there exist any set of three POIS.

\section*{Acknowledgment}

The authors thank the organisers of the “1st Dublin Discrete Mathematics Workshop” for providing a stimulating environment in which this collaboration began. S.\,L.\ was supported by Kempe Foundation, grant JCSMK24-01105.

{\small \bibliographystyle{plain}
\bibliography{bibliographypois}}

\footnotesize

\noindent Alena Ernst, \textsc{Department of Mathematical Sciences, Worcester Polytechnic Institute, Worcester, MA, USA} \\
\noindent \textit{Email address}: \texttt{aernst@wpi.edu}\medskip

\noindent Stefano Lia, \textsc{Department of Mathematics and Mathematical Statistics, Umeå University, Umeå, Sweden} \\
\noindent \textit{Email address}: \texttt{stefano.lia@umu.se}\medskip

\noindent Cian O'Brien, \textsc{Department of Mathematics and Computer Studies, Mary Immaculate College, Limerick, Ireland} \\
\noindent \textit{Email address}: \texttt{obrien.cian@outlook.ie}\medskip

\noindent John Sheekey, \textsc{School of Mathematics and Statistics, University College Dublin, Dublin, Ireland} \\
\noindent \textit{Email address}: \texttt{john.sheekey@ucd.ie}\medskip

\noindent Jens Zumbrägel, \textsc{Faculty of Computer Science and Mathematics, University of Passau, Passau, Germany} \\
\noindent \textit{Email address}: \texttt{jens.zumbraegel@uni-passau.de}

\end{document}